\documentclass{article}

\usepackage{amsmath,amsthm,amsfonts,amssymb}
\usepackage{graphicx}
\usepackage{tikz-cd}

\newtheorem{lm}{Lemma}
\newtheorem{prop}[lm]{Proposition}
\newtheorem{theorem}[lm]{Theorem}

\newtheorem{cy}[lm]{Corollary}

\theoremstyle{definition}
\newtheorem{df}[lm]{Definition}

\newtheorem{rk}[lm]{Remark}

\usepackage{color,cancel}

\newcommand{\beq}{\begin{equation}}
\newcommand{\eeq}{\end{equation}}
\newcommand{\be}{\begin{enumerate}}
\newcommand{\ee}{\end{enumerate}}
\newcommand{\bp}{\begin{proof}}
\newcommand{\ep}{\end{proof}}
\newcommand{\bi}{\begin{itemize}}
\newcommand{\ei}{\end{itemize}}
\newcommand{\bea}{\begin{eqnarray*}}
\newcommand{\eea}{\end{eqnarray*}}
\newcommand{\bml}{\begin{multline*}}
\newcommand{\eml}{\end{multline*}}

\newcommand{\prn}[1]{\left( #1 \right)}
\newcommand{\bkt}[1]{\left[ #1 \right]}
\newcommand{\set}[1]{\left\{ #1 \right\}}
\newcommand{\abs}[1]{\left| #1 \right|}
\newcommand{\norm}[1]{\left|\left| #1 \right|\right|}

\newcommand{\gen}[1]{\langle #1 \rangle}
\newcommand{\Z}{\mathbb{Z}}
\newcommand{\C}{\mathbb{C}}
\newcommand{\Q}{\mathbb{Q}}

%
\edef\storedcatcodeat{\the\catcode`\@} \catcode`\@=11

%

%

\catcode`\@=\storedcatcodeat\relax


\begin{document}

\title{Quantifying separability in limit groups via representations}

\author{Keino Brown, Olga Kharlampovich}

\maketitle
\begin{abstract}
\noindent  We show that for any finitely generated subgroup $H$ of a limit group $L$ there exists a finite-index subgroup $K$ containing $H$, such that $K$ is a subgroup of a group obtained from $H$  by a series of extensions of centralizers and free products with $\mathbb Z$. If $H$ is non-abelian, the $K$  is fully residually $H$.  We also show that for any finitely generated
subgroup of a limit group, there is a finite-dimensional representation of the limit group which separates the
subgroup in the induced Zariski topology. As a corollary, we establish a polynomial upper bound
on the size of the quotients used to separate a finitely generated subgroup in a limit group. This generalizes the results in \cite{L}.  Another corollary is that a hyperbolic limit group satisfies the Geometric Hanna Neumann conjecture.      
\end{abstract}
\section{Introduction}

\noindent A group is said to \textit{retract} onto a subgroup if the inclusion map of the subgroup into the group admits a left-inverse. In which case, the left-inverse is called a \textit{retraction} and the subgroup a \textit{retract}. In \cite{Wilton}, Wilton proves that if $H$ is a finitely generated subgroup of a limit group $L,$  $g\in L-H$, then $H$  is a retract of some finite-index subgroup $K\leq L$ which contains $H$ but not $g$ \cite{Wilton}. We will refer to the smallest set of groups containing all finitely generated free groups that is closed under extensions of centralizers as ICE. By \cite{KM}, limit groups are precisely the finitely generated subgroups of groups from ICE. We will modify the construction, from \cite{Wilton}, of a finite-index subgroup $K\leq G$,  where $G$ is an ICE group, in such a way that not only is there a retraction $K\rightarrow H$, but, for a non-abelian $H$, a discriminating family of retractions (for each finite set $S$ of non-trivial elements in $K,$  there is a retraction from $K$ onto  $H$ that is injective on $S$). In other words, $K$ is fully residually $H$. This finite-index subgroup $K$ will be a  group obtained from $H$ by a finite chain of groups $H=K_0<\ldots <K_n=K$, where $K_{i+1}$ is either $K_i\ast F$, where $F$ is some free group  or $K_{i+1}$ is an extension of a centralizer in $K_i$. We will call a group obtained by such a chain an $H-$GICE group. (If $H$ is free, then the classes of $H$-GICE groups and $ICE$ groups containing $H$, coincide.) It is well known that an extension of a centralizer of a limit group $G$ is fully residually $G$. This was first proved in \cite{Lyndon} (one can find a detailed proof, for example, in  \cite[Lemma 3.7]{KMS}.) It is also known that a free product of a non-abelian limit group $G$ and  a free group is fully residually $G$. Therefore, each group in the chain used to construct $K$ is fully residually $H$.
\begin{theorem}\label {th3} Let $G$ be an ICE-group, $H$ be a finitely generated  subgroup, and $g\in L-H$. Then, there exists a finite-index subgroup $K$ of $G$ such that $H\leq K$, $K$ is an $H$-GICE group,  and $g\not\in K$.
\end{theorem}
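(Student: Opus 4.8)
I would follow Wilton's strategy for constructing the retract, but refine it at each step so that the intermediate groups form an $H$-GICE chain rather than merely an ICE chain. Since $G$ is an ICE group, it is built from a free group $F_0$ by a finite sequence of centralizer extensions $F_0 = G_0 < G_1 < \dots < G_m = G$, where $G_{j+1} = \langle G_j, t \mid [t, C_j] = 1\rangle$ for some centralizer $C_j = C_{G_j}(u_j)$. I would argue by induction on $m$. The base case $m=0$, where $G$ is free and $H$ is a finitely generated subgroup, is essentially Wilton's theorem (or Stallings-type folding): $H$ is a free factor of a finite-index subgroup $K$ of $F_0$ avoiding $g$, and a free factor is visibly an $H$-GICE chain of length one, $K = H \ast F$.

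For the inductive step, write $G = \langle G' , t \mid [t, C] = 1\rangle$ with $G'$ an ICE group of shorter chain length and $C = C_{G'}(u)$. Viewing $G$ as an HNN extension (or amalgam $G' \ast_C (C \times \mathbb{Z})$), I would use the action of $H$ on the associated Bass–Serre tree to decompose $H$ as the fundamental group of a finite graph of groups with vertex groups that are finitely generated subgroups of conjugates of $G'$ and of $C\times\mathbb Z$, and edge groups that are finitely generated subgroups of conjugates of $C$ (finite generation of these uses that limit groups are coherent, and that subgroups of the abelian group $C\times\mathbb Z$ are finitely generated). I would then apply the inductive hypothesis to the $G'$-vertex groups inside $G'$, producing finite-index $H$-GICE overgroups that avoid the relevant translates of $g$ and are compatible with the (abelian, hence easily handled) edge and $C\times\mathbb Z$ data; one assembles these, as in Wilton, into a finite-index subgroup $K \le G$ containing $H$, missing $g$, and built from $H$ by a controlled sequence of free products with $\mathbb Z$ and centralizer extensions — using that an extension of a centralizer, and a free product with a free group, of a group in the chain stays fully residually that group, so the chain condition is preserved. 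The non-abelian hypothesis is what makes the free-product and the centralizer-extension steps fully residually $H$, hence what upgrades the single retraction to the full $H$-GICE structure.

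**Main obstacle.** The delicate point is bookkeeping the graph-of-groups decomposition of $H$ and then \emph{re-gluing} the finite-index overgroups of the vertex groups so that (i) the result is still finite-index in $G$, (ii) it still avoids $g$, and (iii) the gluing can be realized by centralizer extensions and free products with $\mathbb Z$ \emph{in the prescribed order}, starting from $H$. Ensuring that the enlarged vertex and edge groups remain compatible along the edges — i.e. that the edge inclusions still land in the right centralizers after passing to finite-index subgroups — is exactly where Wilton's efficiency/retraction argument must be adapted, and where the $H$-GICE (as opposed to ICE) requirement does real work: one must choose the finite-index subgroups coherently across all vertices, typically by passing to a common finite-index "core" and invoking separability of the edge groups (which are abelian, hence the needed separability is available). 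I expect everything else — coherence, the structure of abelian subgroups of $C \times \mathbb{Z}$, the fully-residual properties of the building operations — to be citable or routine, and the genuine content to lie in this simultaneous, order-respecting reassembly.
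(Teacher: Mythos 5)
Your overall strategy (induction along the ICE chain, following Wilton, with the base case being M.~Hall's theorem for free groups) is the same as the paper's, just phrased in Bass--Serre language instead of Wilton's graphs of spaces; the induced decomposition of $H$ you describe is exactly the decomposition of the cover $X^H$ into vertex and edge spaces. The problem is that your induction does not close as stated. You apply the inductive hypothesis --- which is just the statement of the theorem for the shorter chain --- to the vertex groups $H_v\leq G'$, and then say one ``assembles these, as in Wilton.'' But the assembly is precisely what the bare separability statement cannot deliver: to glue the finite-index overgroups $K_v$ along edges you must arrange that, on \emph{both} sides of every edge, the intersections with the relevant conjugates of $C$ have the \emph{same} degree $d$ over $C$, simultaneously for all edges and all vertex groups, while a prescribed finite set (containing $g$ and the old edge data) stays embedded and distinct elevations stay non-isomorphic. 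This is a quantified statement about all sufficiently large degrees $d$ and about arbitrary finite collections of elevations of the edge circles --- Wilton's \emph{tameness}, strengthened in the paper to \emph{strong tameness} so that the resulting group is GICE over $\pi_1(X')\ast F$ --- and it is this stronger statement, not the theorem itself, that must be the inductive hypothesis. The paper proves ``$X^H$ is strongly tame over any finite set of independent hyperbolic loops generating maximal cyclic subgroups'' by induction and obtains Theorem~\ref{th3} only as the special case $\mathcal L=\emptyset$; with your weaker inductive statement the compatibility you flag as the ``main obstacle'' is not available, and it is not recoverable from separability of the (abelian) edge groups.

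A second, related omission: even after the degrees are matched, the object you assemble from the $K_v$'s is only a finite-sheeted \emph{pre}-cover. The enlarged vertex groups meet conjugates of $C$ in more places than $H$ did, and each unmatched (``hanging'') intersection must be capped off by attaching new pieces --- finite covers of the torus $C\times\mathbb Z$ and of the lower space $Y$ --- before one has an honest finite-index subgroup of $G$ not containing $g$. Showing that these additions are themselves centralizer extensions and free products with free groups \emph{over the group already built from $H$} is the content of the paper's Lemma~\ref{lm3.6} and Proposition~\ref{prop3.7}, and it requires knowing that $\pi_1(Y)$ has a local GICE structure (again part of the strengthened induction). So the two genuinely new ingredients of the paper --- the strong-tameness inductive statement and the GICE-compatible completion of a pre-cover to a cover --- are exactly the steps your proposal leaves to ``as in Wilton,'' and Wilton's original argument only yields a retraction, not the $H$-GICE chain. (Minor points: the non-abelian hypothesis is irrelevant for Theorem~\ref{th3} itself, entering only in deducing Theorem~\ref{th2}; and the paper inducts on the coarser chain (\ref{chain2}), extending all centralizers of one level at once, rather than one centralizer at a time.)
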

\begin{cy}\label {th6} Let $L$ be a limit group, $H$ be a finitely generated  subgroup, and $g\in L-H$. Then, there exists a finite-index subgroup $K$ of $L$ such that $H\leq K$, $K$ is a subgroup of an $H$-GICE group,  and $g\not\in K$.
\end{cy}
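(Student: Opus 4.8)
The plan is to deduce the corollary from Theorem~\ref{th3} by first embedding $L$ into an ICE-group. Recall from \cite{KM} that the limit groups are exactly the finitely generated subgroups of ICE-groups, so there is an ICE-group $G$ and an embedding $L\hookrightarrow G$; identify $L$, and hence the subgroup $H$, with their images in $G$. Since $g\in L-H$, we also have $g\in G-H$, so the triple $(G,H,g)$ satisfies the hypotheses of Theorem~\ref{th3}.

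Applying Theorem~\ref{th3} yields a finite-index subgroup $K'\leq G$ with $H\leq K'$, with $K'$ an $H$-GICE group, and with $g\notin K'$. Set $K:=K'\cap L$. Then $[L:K]\leq[G:K']<\infty$, so $K$ has finite index in $L$; we have $H\leq K$ because $H\leq L$ by hypothesis and $H\leq K'$ by construction; we have $g\notin K$ because $g\notin K'$ and $K\subseteq K'$; and $K$ is a subgroup of the $H$-GICE group $K'$, as required. (When $H$ is non-abelian, $K'$ is fully residually $H$ by the discussion preceding Theorem~\ref{th3}, and since $H\leq K\leq K'$ this property is inherited by $K$, by restricting the relevant retractions $K'\to H$ to $K$.)

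There is essentially no genuine obstacle here: all of the content lies in Theorem~\ref{th3} together with the characterization of limit groups in \cite{KM}. The only points requiring care are bookkeeping: that intersecting $K'$ with $L$ preserves both the inclusion of $H$ and the finiteness of index, and that this intersection only shrinks $K'$, so that the construction delivers "subgroup of an $H$-GICE group" rather than an $H$-GICE group in the strict sense — which is exactly the weaker conclusion stated in the corollary, reflecting that $L$ itself need only be a subgroup of, not equal to, an ICE-group.
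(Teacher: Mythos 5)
Your argument is correct and is essentially the paper's own: the authors likewise deduce the corollary by embedding $L$ into an ICE-group via \cite{KM2} and applying Theorem~\ref{th3}, the intersection $K=K'\cap L$ being exactly the intended bookkeeping step. Nothing further is needed.
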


\noindent This theorem  implies the following result.
\begin{theorem}\label {th2} Let $L$ be a limit group, $H$ be a finitely generated non-abelian subgroup, and $g\in L-H$. Then, there exists a finite-index subgroup $K$ of $L$ such that $H\leq K$, $K$ is fully residually $H,$ and $g\not\in K$.
\end{theorem}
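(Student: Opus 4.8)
The plan is to deduce Theorem~\ref{th2} from Corollary~\ref{th6} together with the residual facts already recalled in the introduction. First I would apply Corollary~\ref{th6} to the triple $(L,H,g)$: since $L$ is a limit group and $H$ is finitely generated, this yields a finite-index subgroup $K\le L$ with $H\le K$, with $g\notin K$, and with $K$ embedded in some $H$-GICE group $M$. By definition $M$ is built by a chain $H=K_0<K_1<\cdots<K_n=M$ in which each $K_{i+1}$ is either $K_i\ast F$ for some free group $F$ or an extension of a centralizer in $K_i$, and the embedding $K\hookrightarrow M$ restricts on $H$ to the inclusion $H=K_0\le M$. Note that each $K_i$ is a finitely generated subgroup of a limit group, hence itself a limit group, and since $H\le K_i$ and $H$ is non-abelian, every $K_i$ is non-abelian; this is exactly what is needed for the free-product step.

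Next I would show that $M$ is fully residually $H$, indeed with a discriminating family of retractions onto $H$. At each stage the quoted results apply: if $K_{i+1}=K_i\ast F$ then the projection killing $F$ is a retraction $K_{i+1}\to K_i$, and the family of such (twisted) projections discriminates finite subsets because $K_i$ is non-abelian; if $K_{i+1}$ is an extension of a centralizer $C_{K_i}(u)$ by a stable letter $t$, then the maps sending $t$ to a suitable power of $u$ are retractions $K_{i+1}\to K_i$ and discriminate finite subsets for $N$ large. One then composes down the chain: given a finite set $S\subseteq M\setminus\{1\}$, push $S$ down one level at a time, at each stage choosing a retraction injective on the (finite) current image of $S$; the composite is a retraction $M\to H$ injective on $S$. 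Hence $M$ is fully residually $H$.

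Finally I would restrict to $K$. Because $H\le K\le M$, every retraction $\rho\colon M\to H$ restricts to a retraction $\rho|_K\colon K\to H$, and if $\rho$ is injective on a finite set $S\subseteq K\setminus\{1\}\subseteq M\setminus\{1\}$ then so is $\rho|_K$. Thus $K$ is fully residually $H$; it is finite-index in $L$, contains $H$, and does not contain $g$, which proves the theorem.

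The only point that needs care is the second step: one must use the injectivity-on-finite-sets formulation (not mere residual finiteness) so that "fully residually" composes along the chain, and one must keep track that every intermediate group remains non-abelian so that the free-product step is legitimate — both of which are guaranteed by $H$ being non-abelian and sitting inside every $K_i$. Once $M$ is known to be fully residually $H$, the descent to the finite-index subgroup $K$ is routine.
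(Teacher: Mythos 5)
Your proposal is correct and follows essentially the same route as the paper: it deduces the theorem from Theorem~\ref{th3}/Corollary~\ref{th6} together with the standard facts that extensions of centralizers and free products with free groups over a non-abelian base admit discriminating families of retractions (so every $H$-GICE group is fully residually $H$), and that this property passes to subgroups containing $H$. The only difference is that you sketch proofs of these discriminating-family facts, whereas the paper simply cites them (Lyndon, Kharlampovich--Myasnikov--Sklinos).
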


\noindent Theorem \ref{th2} is also true   when $L$ is abelian and, therefore, H is abelian (see  Remarks at the end of the proof).     In the case when $H$ is abelian and  $L$ is non-abelian  a finite-index subgroup of $L$ cannot be fully residually $H$.


\begin{theorem} \label{th1} Let $L$ be a limit group. If $H$ is a finitely generated non-abelian subgroup
of $L,$ then there is a  faithful representation $\rho _H:L\rightarrow GL(V)$ such that $\overline {\rho _H(H)} \cap\rho_H(L)=\rho _H(H)$, where $\overline {\rho _H(H)} $ is the Zariski closure of  $\rho _H(H).$ 
\end{theorem}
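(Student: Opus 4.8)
The plan is to pass, via a virtual retraction, to a finite‑index subgroup $K$, arrange the conclusion there by ``doubling'' a faithful representation, and then transport it back up to $L$ by induction of representations.

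\textbf{Step 1: reduction to a finite‑index virtual retract.} I would fix any $g\in L-H$ (if $L=H$ the assertion is trivial) and invoke Corollary \ref{th6} to obtain a finite‑index $K\le L$ with $H\le K$ and $K$ a subgroup of an $H$‑GICE group $\widehat K$. Every $H$‑GICE group retracts onto $H$: along its defining chain $H=K_0<\dots<K_n$ compose the evident retractions $K_{i+1}\to K_i$ (killing the adjoined free factor, resp.\ the adjoined free‑abelian part of the centralizer extension). Restricting such a retraction of $\widehat K$ to $K$ yields a retraction $r\colon K\to H$ (alternatively, $r$ is produced directly by Wilton's theorem \cite{Wilton}). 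From now on only these facts matter: $[L:K]<\infty$, $H\le K$, and $r\colon K\to H$ is a retraction; $g$ is discarded.

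\textbf{Step 2: doubling inside $K$.} Being a limit group, $K$ is linear; fix a faithful finite‑dimensional $\sigma_0\colon K\to GL(V_0)$ over $\mathbb C$ and set $\sigma:=\sigma_0\oplus(\sigma_0\circ r)\colon K\to GL(V_0\oplus V_0)$. Then $\sigma$ is faithful (already in the first coordinate), and $\sigma(H)=\{(\sigma_0(h),\sigma_0(h)):h\in H\}$ lies in the Zariski‑closed diagonal $\Delta=\{(A,A):A\in GL(V_0)\}$, so $\overline{\sigma(H)}\subseteq\Delta$. Since the fixed‑point set of a retraction onto $H$ equals $H$ and $\sigma_0$ is injective, $\Delta\cap\sigma(K)=\{(\sigma_0(k),\sigma_0(r(k))):r(k)=k\}=\sigma(H)$. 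Hence $\overline{\sigma(H)}\cap\sigma(K)=\sigma(H)$.

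\textbf{Step 3: inducing up.} Let $\rho:=\mathrm{Ind}_K^L\sigma\colon L\to GL(V)$ with $V=\bigoplus_{x\in L/K}V_x$, each $V_x\cong V_0\oplus V_0$ and $\rho(g)V_x=V_{gx}$ acting through the relevant element of $\sigma(K)$. This $\rho$ is faithful: if $g$ is not in the core $N=\bigcap_x xKx^{-1}$ it permutes the blocks $V_x$ nontrivially, while a nontrivial element of $N\le K$ acts nontrivially on $V_{eK}$ through $\sigma$. For the Zariski statement, $\rho(H)\subseteq\overline{\rho(H)}\cap\rho(L)$ is clear. Conversely, $\rho(H)$ lies in the Zariski‑closed set of matrices that block‑permute the $V_x$ by an element of $\mathrm{im}\,(H\to\mathrm{Sym}(L/K))$ — a finite union of the closed sets $Z_\tau=\{A: AV_x\subseteq V_{\tau(x)}\ \forall x\}$, which are pairwise disjoint on invertibles — so if $\rho(g)\in\overline{\rho(H)}$ then $g$ and some $h\in H$ act equally on $L/K$, forcing $g\in HN\subseteq K$. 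Assuming now $g\in K$: $V_{eK}$ is a $K$‑subrepresentation isomorphic to $\sigma$, its $K$‑invariant complement $\bigoplus_{x\ne eK}V_x$ gives a $K$‑equivariant projection $q\colon V\to V_{eK}$, and with $\iota$ the inclusion, the morphism of varieties $\Phi\colon A\mapsto qA\iota$ satisfies $\Phi\circ(\rho|_K)=\sigma$. Continuity of $\Phi$ sends $\rho(g)\in\overline{\rho(H)}$ to $\sigma(g)\in\overline{\sigma(H)}$; together with $\sigma(g)\in\sigma(K)$ and Step 2 this gives $\sigma(g)\in\sigma(H)$, and faithfulness of $\sigma$ gives $g\in H$. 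Thus $\overline{\rho(H)}\cap\rho(L)=\rho(H)$, so $\rho_H:=\rho$ works.

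\textbf{Expected main obstacle.} The delicate point is precisely the end of Step 3: showing that induction of representations preserves separation of $H$ in the Zariski topology. This is handled by combining the block/permutation structure of an induced module (which pins down the coset $g$ can lie in) with the $K$‑equivariant projection onto the identity‑coset block (which pulls the closure condition back to $\sigma$, where it was built in by hand). The only other input is the linearity of limit groups needed for $\sigma_0$; this is standard and in any case follows from the $H$‑GICE structure by iterating the usual construction extending a faithful representation across an extension of a centralizer.
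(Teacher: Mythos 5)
Your proof is correct, but it takes a genuinely different (and more economical) route than the paper at the key step. The paper first passes to a finite-index $K$ that is \emph{fully residually} $H$ (Theorem \ref{th2}), builds a strongly distinguishing pair of faithful representations of $K$ by choosing, for each ball $B_t$, representations from the discriminating family and then taking an ultraproduct over a non-principal ultrafilter, and then feeds this into the machinery of \cite{L} (Lemma 3.1 / Corollary 3.3 there) to get $\Phi:K\to GL(2,\mathbb C)\times GL(2,\mathbb C)$ with $\Phi(k)\in Diag$ iff $k\in H$, before inducing up to $L$. You observe instead that a mere virtual retraction $r:K\to H$ (Wilton's theorem, or retracting the GICE chain from Corollary \ref{th6}) already does the job: the pair $(\sigma_0,\sigma_0\circ r)$ agrees on $H$ and differs on every element of $K-H$, so your doubled representation $\sigma_0\oplus(\sigma_0\circ r)$ lands in the diagonal exactly on $H$ — this bypasses full residual-$H$-ness and the ultrafilter argument entirely (and, as you implicitly use, faithfulness of the second coordinate is not needed). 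Both arguments finish by inducing from $K$ to $L$; here you actually supply the verification that the paper compresses into ``it follows from the construction and the definition of induction'': the block-permutation structure forces $g\in HN\subseteq K$, and the $K$-equivariant projection onto the identity-coset block is a Zariski-continuous morphism carrying $\overline{\rho(H)}$ into $\overline{\sigma(H)}$, reducing to the statement at the level of $K$. What the paper's heavier route buys is the structural Theorem \ref{th2} itself (fully residually $H$), which is of independent interest and is used elsewhere (e.g., the Hanna Neumann section), and a formulation squarely inside the ``strong distinguishing'' framework of \cite{L} that is then reused verbatim for the quantitative Corollary \ref{co1}; your route is shorter, self-contained modulo linearity of limit groups and Wilton's retraction theorem, and does not even need $H$ non-abelian.
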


\noindent Likewise, this theorem is true when $L$ is abelian.

\begin{cy} \label{co1} Let $L$ be a limit group and $S$ be a finite generating set for $L.$ If $H\leq L$ is a finitely generated subgroup, then there exists a constant $N>0$ such that for each $g\in L- H,$ there exist a finite group $Q$ and a homomorphism $\varphi:L\longrightarrow Q$ such that $\varphi\prn{g}\notin\varphi\prn{H}$ and $\abs{Q}\leq\norm{g}_S^N.$ If $K=H\ker\varphi,$ then $K$ is a finite-index subgroup of $L$ whose index is at most $\abs{Q}\leq\norm{g}_S^N$ with $H\leq K$ and $g\notin K.$ Moreover, the index of the normal core of the subgroup $K$ is bounded above by $\abs{Q}$.
\end{cy}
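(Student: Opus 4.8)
The plan is to deduce everything from the faithful finite-dimensional representation $\rho_H\colon L\to GL(V)$ furnished by Theorem \ref{th1} (and its analogue when $H$ is abelian) via a reduction-modulo-a-prime-ideal argument, keeping careful track of the fact that every ``constant'' that occurs depends only on $L$, $H$ and $S$ and never on $g$. Write $d=\dim V$. Since $L$ is finitely generated and $\rho_H$ is built from the algebraic ICE / centralizer-extension constructions, $\rho_H(L)$ lies in $GL_d(F)$ for a fixed number field $F$, and, after inverting a suitable fixed integer $m\geq1$, in $GL_d(\mathcal O[1/m])$, where $\mathcal O$ is the ring of integers of $F$. Moreover $W:=\overline{\rho_H(H)}$ is an algebraic subset of $GL_d$ defined over $F$, hence cut out by finitely many polynomials $p_1,\dots,p_k\in\mathcal O[x_{ij}]$ of some fixed degree $D$ and fixed coefficient height; all of $d$, $F$, $m$, $k$, the $p_i$ and these bounds are fixed once and for all.

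Now fix $g\in L-H$. As $\rho_H$ is injective, $\rho_H(g)\notin\rho_H(H)$; since Theorem \ref{th1} gives $W\cap\rho_H(L)=\rho_H(H)$ and $\rho_H(g)\in\rho_H(L)$, we get $\rho_H(g)\notin W$, so $p_i(\rho_H(g))\neq0$ for some $i$. Writing $g$ as a geodesic word of length $n:=\norm{g}_S$ in $S\cup S^{-1}$ exhibits $\rho_H(g)$ as a product of $n$ of the fixed matrices $\rho_H(s^{\pm1})$, so in each archimedean embedding of $F$ the entries of $\rho_H(g)$ have absolute value at most $\lambda^{n}$ for a fixed $\lambda$; hence the nonzero rational integer $N_{F/\Q}\!\big(m^{D}p_i(\rho_H(g))\big)$ has absolute value at most $\gamma^{n}$ for a fixed $\gamma$. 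By Chebyshev's estimate $\prod_{\ell\leq x}\ell\geq e^{c_0x}$ for a fixed $c_0>0$ and $x$ large, so there is a rational prime $\ell\leq c\cdot n$ (with $c$ fixed) dividing neither $m$ nor this integer. Choose a prime ideal $\mathfrak p\mid\ell$ of $\mathcal O$. Then $\rho_H(L)\leq GL_d(\mathcal O_{\mathfrak p})$, and reduction modulo $\mathfrak p$ gives a homomorphism $\varphi\colon L\to GL_d(\mathbb{F}_{\mathfrak p})$. Put $Q:=\varphi(L)$, a finite group with $\abs Q<N(\mathfrak p)^{d^{2}}\leq\ell^{[F:\Q]d^{2}}\leq(cn)^{[F:\Q]d^{2}}$. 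Since $p_i(\rho_H(g))$ is a unit in $\mathcal O_{\mathfrak p}$ while $p_i$ vanishes identically on $\rho_H(H)$, the reduction of $p_i$ vanishes on $\varphi(H)$ but not at $\varphi(g)$; hence $\varphi(g)\notin\varphi(H)$. Absorbing the fixed factor $c^{[F:\Q]d^{2}}$ into the exponent (and enlarging the constant to cover the finitely many $g$ of very small length) yields $\abs Q\leq\norm{g}_S^{N}$ for a suitable fixed $N$.

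It remains to read off the assertions about $K:=H\ker\varphi$. Clearly $H\leq K\leq L$, and $\ker\varphi\trianglelefteq L$ with $\ker\varphi\leq K$. Because $\varphi(K)=\varphi(H\ker\varphi)=\varphi(H)$, we have $[L:K]=\abs{\varphi(L)}/\abs{\varphi(H)}=\abs Q/\abs{\varphi(H)}\leq\abs Q\leq\norm{g}_S^{N}$. If $g\in K$ then $\varphi(g)\in\varphi(K)=\varphi(H)$, contradicting the previous paragraph, so $g\notin K$. Finally the normal core of $K$ in $L$ contains the normal subgroup $\ker\varphi$, so its index is at most $[L:\ker\varphi]=\abs Q$.

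The main obstacle is the second paragraph: converting the merely \emph{exponential} bound $\gamma^{n}$ on $N_{F/\Q}(m^{D}p_i(\rho_H(g)))$ coming from the word length of $g$ into a genuinely \emph{polynomial} bound on $\abs Q$. The resolution is that Chebyshev's prime-counting estimate lets us dodge all prime factors of $p_i(\rho_H(g))$ (and of $m$) using a prime of size only $O(n)$, after which the exponent $[F:\Q]d^{2}$ of $n$ that appears is a constant. One should also double-check the uniform arithmetic bookkeeping of the first paragraph — that a single number field, a single bounded denominator, and a single bounded family of defining equations for $\overline{\rho_H(H)}$ serve for all $g$ simultaneously — but this is immediate from the finite generation of $L$ and of the ideal of $\overline{\rho_H(H)}$ together with the algebraic nature of the representation produced by Theorem \ref{th1}.
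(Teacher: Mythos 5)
Your argument has the right overall shape (it is essentially the strategy of \cite{L}, which the paper also follows), and the final bookkeeping — Chebyshev prime avoidance to turn an exponential norm bound into a polynomially bounded quotient, the deduction that $\varphi(g)\notin\varphi(H)$ from a defining polynomial of $\overline{\rho_H(H)}$ that is a unit at $\varphi(g)$, and the statements about $K=H\ker\varphi$ and its normal core via $\ker\varphi$ — is all correct. But the foundation of your second paragraph contains a genuine gap: you assume that $\rho_H(L)\leq GL_d(F)$ for a fixed number field $F$, hence (after inverting one integer) $\rho_H(L)\leq GL_d(\mathcal O[1/m])$, and you dismiss the verification as ``immediate from the algebraic nature of the representation produced by Theorem \ref{th1}.'' That is not what Theorem \ref{th1} provides. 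The representation there is built by composing discriminating $H$-homomorphisms of a finite-index subgroup $K$ with a faithful representation of $H$, passing to an ultraproduct over a non-principal ultrafilter, and then inducing up to $L$; its matrix entries a priori only generate a finitely generated field $K_L\cong K(T_1,\dots,T_d)$ of possibly \emph{positive transcendence degree} over $\Q$. The paper arranges only that the Zariski closures $\overline{\rho_H(L)}$ and $\overline{\rho_H(H)}$ are $\overline\Q$-defined, not that the group elements themselves have algebraic entries. If the entries are transcendental, your key quantity $N_{F/\Q}\bigl(m^{D}p_i(\rho_H(g))\bigr)$ is not even defined, and there is no prime ideal $\mathfrak p$ to reduce by, so the whole Chebyshev/reduction step collapses. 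Making your assumption true would itself require a nontrivial argument (one would have to show a faithful, strongly distinguishing representation can be chosen at a $\overline\Q$-point, which involves infinitely many conditions), and nothing in the paper or in your sketch supplies it.

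This is exactly the point where the paper takes a different (and necessary) route: it quotes Lemma \ref{lm11} (\cite[Lemma 5.1]{L}), whose proof works inside the finitely generated ring $R_L$ obtained from $\mathcal O_{K_0}[T_1,\dots,T_d]$ by inverting finitely many integers and polynomials, and then applies Lemma 2.1 of \cite{KM} to produce a finite quotient ring $R$ with $\psi_R(Q_g)\neq 0$ and with $|R|$ polynomially bounded in the degree and coefficient size of $Q_g$ (hence in $\norm{g}_S$). In other words, instead of reducing algebraic integers modulo a prime ideal, one must \emph{specialize the transcendentals} into finite fields with quantitative control; your proposal works only in the special case $d=0$ (all entries algebraic). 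If you replace your first two paragraphs by this specialization argument — i.e., bound the height and degree of $Q_g=P_{j_g}\bigl((M_g)_{1,1},\dots,(M_g)_{n,n}\bigr)$ in terms of $\norm{g}_S$ and invoke \cite[Lemma 2.1]{KM} — the remainder of your write-up (the passage from $|R|$ to $|Q|\leq|R|^{n^2}$, and the statements about $K$ and its normal core) goes through verbatim and coincides with the paper's proof.
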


 \noindent To use Theorem \ref{th1} for the proof of this corollary in the case when $L$ is non-abelian and $H$ is abelian we can take instead of $H$ a non-abelian subgroup $H_1=H*\langle x\rangle$ for a suitable  element $x$. 

Our Theorem \ref{th1} and Corollary \ref{co1} generalize results for free and surface groups from \cite{L}. We use \cite{L} to deduce Corollary \ref{co1} from Theorem \ref{th1}.
Corollary \ref{co1} establishes polynomial bounds on the
size of the normal core of the finite index subgroup used in separating $g$ from $H$. The constant $N$ explicitly depends on the subgroup $H$ and the dimension of $V$ in
Theorem \ref{th1}. For a general finite index subgroup, the  upper bound for the index
of the normal core is factorial in the index of the subgroup. It is for this reason that
we include the statement about the normal core of $K$ at the end of the corollary.

Recently, several effective separability results have been established; see \cite{BR1}-\cite{L},  \cite{K1}-\cite{K3},  \cite{P1}-\cite{S}.
Most relevant here are  papers \cite{L}, \cite{HP}.  
The methods used in \cite{HP} give linear bounds in terms of the word length of $|g|$ on the
index of the subgroup used in the separation but do not  produce polynomial
bounds for the normal core of that finite index subgroup. We can also obtain bounds on the index of the separating subgroup on
the order of magnitude $C|g|,$ where $C$ is a constant depending on $L$ and $H$.

In Section \ref{HN} we will formulate the Geometric Hanna Neumann conjecture by Antolin and Jaikin-Zapirain for limit groups and give a proof (due to Jaikin-Zapirain)  that Theorem \ref{th3}  implies the conjecture for hyperbolic limit groups (Theorem \ref{thHN}).

\vspace{2mm}
\section{Preliminaries}

\begin{df} A family $\mathcal F$ of $H$-homomorphisms (identical on $H$) from a group $G$ onto a subgroup $H$ is called a \textit{discriminating family} if for any finite set $S$ of non-trivial elements in $G$ there exists a homomorphism $\psi\in\mathcal F$ such that for any $g\in S$, $\psi (g)\neq 1.$ We say $G$ is  \textit{fully residually} $H$ if there exists a discriminating family of $H$-homomorphisms from $G$ to $H$.
\end{df}
\begin{df}
    Let $G$ be a group and $C_G(u)$ denote the centralizer of an element $u\in G$. An \textit{extension of a centralizer} of $G$ is the group $$(G,u)=\gen{G,t_1,\ldots ,t_k\mid\bkt{c,t_i}, c\in C_G(u), [t_i,t_j], i,j=1,\ldots ,k}.$$  Similarly, if we extend centralizers of several non-conjugated elements $u_1,\ldots ,u_m$ in $G$ we denote the obtained group by
  $(G,u_1,\ldots ,u_m).$  
    
     An \textit{iterated extension of centralizers}  is obtained by finitely many applications of this construction to a finitely generated free group and is called an ICE-group. In this case we can  assume that each centralizer is extended only once. In other words, on each step  $C_G(u)$ is cyclic.

  Let \begin{equation} \label{chain1} F=G_0<G_1=(G,u_1)<\ldots < G_n=(G_{n-1},u_n)=G\end{equation} be a chain of centralizer extensions to obtain an ICE-group $G$. Then we always assume that in this chain centralizers in $G_i$ are extended before centralizers in $G_{i+1}.$ 
 We can modify this chain the following way
\begin{equation} \label{chain2} F=G_0<G_{i_1}<\ldots <G_{i_k}=G, \end{equation}  
 where $G_{i_1}=(G_0, u_1,\dots ,u_{i_1})$, where $u_1,\ldots ,u_{i_1}$ are in $G_0$  is obtained from $G_0$ by extending all the centralizers of elements from $G_0$ that appear in the first chain. Similarly $G_{i_{j+1}}$ is obtained from $G_{i_j}$ by extending all the centralizers of elements in $G_{i_j}$ that were extended in the first chain.     
\end{df}

\begin{df}
    Let $G$ be an ICE group. Then associated with $G$ is a finite $K(G,1)$ space, called an \textit{\textit{ICE space}}, which is constructed as follows:
    \begin{enumerate}
        \item If $G$ is free, then take $X=K(G,1)$ to be a  compact graph of suitable rank.

        \item If $G$
is obtained from a group $G'$ by an extension of  a centralizer and $Y=K(G',1),$ then given an essential closed curve $\partial_{+}:S^{1}\longrightarrow Y$ representing a generator of  $C_{G'}(g')$  and a coordinate circle $\partial_{-}:S^{1}\longrightarrow T,$ where $T$ is a torus, take   
$$X=Y\sqcup\prn{\bkt{-1,1}\times S^1}\sqcup T,$$ 
identifying $\prn{
\pm 1, \theta}\in \bkt{-1,1}\times S^1$ with $\partial_{\pm}\prn{\theta}.$
\end{enumerate}
\end{df}

\begin{rk} Associated to each ICE space $X$ is a graph of spaces decomposition whose vertices are $Y$ and $T$ and edges are circles.
\end{rk}

\begin{df} \label{df8} A group $G$ is an \textit{$H$-GICE group} if  it is obtained from $H$ by a series of free products with free groups  and extensions of centralizers. Here, GICE stands for  generalized iterated centralizer extension.
\end{df} 

 \noindent If $H$ is a non-abelian limit group, then any $H$-GICE group and its subgroups containing $H$ are fully residually $H$, see, for example \cite{KMS}. Therefore, Theorem \ref{th3} implies Theorem \ref{th2}.
\begin{df}Let each of the following spaces have a chosen basepoint, and suppose that the
maps are basepoint preserving. Let $\rho :(B',b')\rightarrow (B,b)$ be a covering map. Let $\delta :(A,a)\rightarrow (B,b)$ be a map, where $A$ is a connected complex (in our case $A$ will be a loop). Let $\kappa :(A',a')\rightarrow (A,a)$ be the smallest cover of $(A,a)$ such that the map $\delta\circ\kappa$ has a lift $\delta '$. We call $\delta ':(A',a')\rightarrow (B',b')$ the \textit{elevation} of $\delta$.

Two elevations ${\delta _1}': {A_1}'\rightarrow {B}'$ and ${\delta _2}': {A_2}'\rightarrow {B}'$  are {\em isomorphic} if there exists a homeomorphism $\iota : {A_1}'\rightarrow {A_2}'$ covering the identity map on $A$, such that $
{\delta _1}'={\delta _2}'\circ\iota.$

\end{df}
For more information on elevations we refer to \cite[Section 2]{Wilton}.


\begin{df}
    Let $X$ and $X'$ be graphs of spaces ($X'$ is not assumed to be connected). A \textit{pre-covering }is a locally injective map $X'\longrightarrow X$ that maps vertex spaces and edge spaces of $X'$ to vertex spaces and edge spaces of $X$ respectively and restricts to a covering on each vertex space and each edge space. Furthermore, for each edge space $e'$ of $X'$ mapping to an edge space $e$ of $X,$ the diagram of edge maps 
       \[\begin{tikzcd}
e' \arrow{r}{\Bar{\partial}_{\pm}} \arrow[swap]{d}& \Bar{V}_{\pm} \arrow{d} \\
e \arrow{r}{\partial_{\pm}} & V_{\pm}
\end{tikzcd}
\]

    \noindent is required to commute. The domain $X'$ is called a \textit{pre-cover}. 
\end{df}

\begin{df}(\cite[Definition 3.1]{Wilton}) Let $X$ be a complex, $ X'\longrightarrow X$ be a covering, and $$\mathcal{L}=\{\delta_i:C_i\longrightarrow X\}$$ be a finite collection of independent, essential loops. The cover $X'$ is said to be \textit{tame over} $\mathcal{L}$ if the following holds: let  $\Delta\subset X'$ be a finite subcomplex
 and $$\mathcal{L}'=\{\delta_j':C'_j\longrightarrow X'\}$$ be a finite collection of pairwise non-isomorphic infinite degree elevations, each of which is an  elevation of some loop in $\mathcal{L}.$ Then for all sufficiently large positive integers $d$ there exists an intermediate finite-sheeted covering
 $$X'\longrightarrow\hat{X}\longrightarrow X$$ such that

 \begin{enumerate}
     \item each $\delta_j'$ descends to some degree $d$ elevation $\hat{\delta_j}$ 
     
     \item the $\hat{\delta_j}$
 are pairwise non-isomorphic,     \item $\Delta$ embeds into $\hat{X},$ and
     \item there exists a retraction $\rho: \pi_1(\hat{X})\longrightarrow\pi_1(X')$ such that $$\rho(\hat{\delta}_{j*}(\pi_1(\hat{C_j})))\subset\delta_{j*}'(\pi_1(C_j'))$$ for each $j.$
 \end{enumerate}
\textit{Remark.} We will also say a covering $X'\longrightarrow X$ is tame over a given set of finite independent, essential loops whenever its domain $X'$ is.
\end{df}
Notice, that covers of tori are tame over coordinate circles, see \cite[Lemma 3.3]{Wilton}.

\begin{df} The cover $X'$ is {\em strongly tame over} $\mathcal L$ if it is tame over $\mathcal L$ and   $\pi_1(\hat X)$ is a  $(\pi_1(X')\ast F)$-GICE group, where $F$ is a free group with  basis $\{\hat\delta_{j\ast} (\pi_1(\hat C_j))\}.$ 
\end{df}

\begin{df}
     A group $G$ is said to admit a \textit{local GICE structure} if for each finitely generated  subgroup $H\leq G$ and a finite set of elements $g_i\not \in H$ one can construct a finite-index subgroup $K$ containing $H$ and not containing these elements such that $K$ is a $H$-GICE group.
     
 \end{df}


\section{Proof of Theorems \ref{th3}, \ref{th2}}

\noindent We will  follow the construction in \cite{Wilton} changing it a couple of times to prove a theorem similar to  \cite[Theorem 3.8]{Wilton}.   One difference is that we will use induction on the number of steps in chain (\ref{chain2}) while \cite[Theorem 3.8]{Wilton} is proved by induction on the number of steps in chain (\ref{chain1}).

\vspace{2mm}\noindent Let $X$ be an ICE space constructed by gluing several tori $T_1,\ldots ,T_k$ to a simpler ICE space $Y$ with edge spaces being loops. Let $H\subset\pi_1(X)$ be a finitely generated subgroup and $X^H\rightarrow X$ be the corresponding covering. Then $X^H$ inherits a graph of spaces decomposition, with vertex spaces the connected components of the pre-images of the vertex spaces of $X$ and edge spaces and maps given by all the (isomorphism classes of) elevations of the edge maps to the vertex spaces of $X^H$. Let  $X'\subseteq X^H$ be a core of $X^H$. A {\bf core} is a connected sub-graph of spaces with finite underlying graph such that the inclusion map is a $\pi _1$-isomorphism. Since $H$ is finitely generated, a core exists. Let $\Delta\subset X^H$ be a finite subcomplex. Enlarging $X'$ if necessary we can assume $\Delta\subset X'$.

 \vspace{2mm}\noindent Replacing the tameness hypothesis in  \cite[Proposition 3.4]{Wilton}  by strong tameness, we have the following. 


\begin{prop}\label{prop3.4} (\textit{Passing to finite-sheeted pre-covers}) Let $X$ be an ICE space constructed by gluing several tori $T_1,\ldots ,T_k$ to a simpler ICE space $Y$ with edge spaces being loops,  
Let $X'\rightarrow X$ be a pre-covering with finite underlying graph. Every vertex space $V'$ of $X'$ covers some vertex space $V$ of $X.$ Assume that each $Y'$  is strongly tame over the set of edge maps incident at $Y$.  Let $\Delta\subset X'$ be a finite subcomplex.  Then  there is  a finite-sheeted intermediate pre-covering 

$$X'\rightarrow \Bar{X}\rightarrow X$$ such that 

\be

\item $\Delta$ embeds into $\Bar{X};$ and

\item
$\pi _1(\bar X)$ is  a $\pi _1(X')$-GICE group. 
\ee

\end{prop}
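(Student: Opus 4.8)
The plan is to induct on the number of tori $T_1,\dots,T_k$ glued to $Y$, equivalently on the number of vertex spaces of $X$ that are tori, mirroring the structure of \cite[Proposition 3.4]{Wilton} but keeping track at each stage of the GICE structure rather than merely the tameness. As a base case, if $X$ has no tori then $X=Y$, every vertex space $V'$ of $X'$ covers $Y$ and is strongly tame over its incident edge maps by hypothesis; since $X'$ has finite underlying graph and the edge spaces are circles, we can build $\bar X$ by repeatedly gluing the finitely many vertex covers together along elevations of edge loops, and strong tameness of each $Y'$ gives at each gluing step that the fundamental group of the partial union is a GICE group over $\pi_1(X')$. Here I would use the standard fact, recorded just before the proposition, that covers of tori are tame over coordinate circles, together with the fact (cited in the excerpt after Definition \ref{df8}) that an $H$-GICE group and its subgroups containing $H$ are fully residually $H$, so that the free-product and centralizer-extension moves compose correctly.

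For the inductive step I would single out one torus vertex space $T=T_k$ of $X$, let $X_0$ be the ICE subspace obtained by deleting $T$ (so $X_0$ is glued from $T_1,\dots,T_{k-1}$ to $Y$), and let $X_0'\subset X'$ be the sub-pre-cover lying over $X_0$. Apply the inductive hypothesis to $X_0'\to X_0$ (with $\Delta\cap X_0'$ as the finite subcomplex to embed) to obtain a finite-sheeted intermediate pre-cover $X_0'\to\bar X_0\to X_0$ with $\pi_1(\bar X_0)$ a $\pi_1(X_0')$-GICE group. Now the vertex spaces of $X'$ over $T$ are covers of a torus, hence tame over the coordinate circle $\partial_-$; the elevations of $\partial_+$ (the curve in $X_0$ representing a generator of the extended centralizer) that occur in $X'$ are finitely many infinite-degree loops together with some finite-degree ones, and by tameness of $\bar X_0$ over the edge loops incident to $T$ we may, after enlarging $\bar X_0$, assume each infinite-degree elevation of $\partial_+$ descends to a degree-$d$ elevation $\hat\delta_j$ in $\bar X_0$, with the $\hat\delta_j$ pairwise non-isomorphic and $\Delta$ still embedded. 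Gluing the torus-covers of $X'$ onto $\bar X_0$ along these now-finite elevations yields the desired $\bar X$.

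The main obstacle—and the point where the word "strongly" in "strongly tame" is doing real work—is verifying clause (2), that $\pi_1(\bar X)$ is a $\pi_1(X')$-GICE group and not merely that $X'\to X$ is tame. When we glue a torus-cover along an elevation of $\partial_+$, on the level of fundamental groups we are forming (by Bass–Serre theory for the resulting graph of groups) either a free product with a free group, if the elevation loop is primitive and "new", or an extension of a centralizer, if it matches up with an existing loop whose stabilizer we are enlarging by a $\Z$-direction coming from the torus. I would need to check that the conjugacy/primitivity bookkeeping works out so that at every gluing the move is exactly one of the two permitted GICE moves relative to the current group, and that these moves are applied in an order consistent with chain (\ref{chain2})—this is why the induction is organized around chain (\ref{chain2}) rather than (\ref{chain1}), so that all centralizer extensions coming from a single torus are performed together. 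The retraction in clause (4) of tameness, upgraded via strong tameness to the GICE presentation, is what lets the inductively-built GICE structure on $\pi_1(\bar X_0)$ extend across the final gluing; assembling these retractions compatibly across all the torus-gluings is the last technical step.
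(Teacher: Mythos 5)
There is a genuine gap: the heart of Proposition \ref{prop3.4} is clause (2), and your proposal does not establish it — you explicitly defer it as ``conjugacy/primitivity bookkeeping'' to be checked, but that bookkeeping \emph{is} the proof. The paper argues directly, with no induction inside the proposition (the strong tameness of each $Y'$ is a hypothesis here; the induction lives in the later proposition on chain (\ref{chain2})). It replaces every vertex space $V'$ by a finite-sheeted cover $\bar V$ with one common elevation degree $d$, keeps the same underlying graph, and — after enlarging $\Delta$ by the compact edge spaces so that non-isomorphic finite-degree elevations remain non-isomorphic and the graph-of-groups decomposition of $\pi_1(\bar X)$ matches that of $\pi_1(X')$ (a point you omit) — verifies (2) in two ordered stages: first, passing from infinite torus covers and infinite-degree elevations to finite covers and degree-$d$ elevations is an extension of centralizers of abelian free factors of $\pi_1(X')$, with the extending elements lying in the new free factor; second, the moves supplied by strong tameness at each $Y$-vertex, which a priori exhibit $\pi_1(\bar V)$ as a $(\pi_1(V')\ast F)$-GICE group, are observed to apply to the whole group $\pi_1(X'')$. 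Your account of the gluings (``free product with a free group if the elevation loop is primitive and new, extension of a centralizer if it matches an existing loop'') does not reflect this: the free-product moves come from strong tameness at the $Y$-vertices, while attaching finite torus covers is always a centralizer extension, and without the explicit two-stage reordering the claim that each gluing is a permitted GICE move ``relative to the current group'' is unsupported.

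The induction on the number of tori also introduces difficulties the direct argument avoids. Deleting $T_k$ may disconnect $X'_0$, and in general $\pi_1(X'_0)\neq\pi_1(X')$, since the edges of $X'$ over $T_k$ and the (possibly infinite) torus covers over $T_k$ enter $\pi_1(X')$ through amalgamations and HNN extensions over the elevation circles; so the inductive conclusion that $\pi_1(\bar X_0)$ is a $\pi_1(X'_0)$-GICE group is a statement over the wrong base group, and converting it into a $\pi_1(X')$-GICE structure after regluing the $T_k$-covers requires exactly the reordering of moves (the $T_k$-centralizer extensions performed first on abelian free factors of $\pi_1(X')$) that your order — finish $X_0$ completely, then glue — does not obviously allow. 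Your base case ($X=Y$, no tori) is also muddled: with no edges there is nothing to glue, and the statement collapses to the strong tameness hypothesis itself. The underlying idea of tracking GICE moves through Wilton's construction is the right one, but as written the decisive step is missing.
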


\begin{proof} Let $\Delta_0$ be a finite complex that contains $\Delta$ and all the compact edge spaces of $X'.$ Let $V'$ be a vertex space of $X'$ covering the vertex $V$ of $X.$ Set $\Delta_{V'}=V'\cap\Delta_0$ and consider the edge maps $\partial_{i}':e_i\rightarrow V'$ of edges $e_i$ incident at $V'$ that are infinite-degree elevations of $\partial_{\pm}:e\rightarrow V.$ Since each $V'$ is tame over  the set of edge maps incident at $V$ and each $Y'$ is strongly tame over  the set of edge maps incident at $Y$,  for all sufficiently large $d$ there exists a  an intermediate finite-sheeted covering $$V'\rightarrow \Bar{V}\rightarrow V$$ such that

\begin{enumerate}
    
     \item $\Delta_{V'}$ embeds into $\Bar{V},$

     \item each $\partial_i'$ descends to some degree $d$ elevation $\Bar{\partial_i}$ of $\partial_{\pm}.$  \end{enumerate}

 \noindent If $d$ is large enough, we can take it to be the same $d$ over all vertex spaces of $X'.$ Let $\Bar{X}$ be the graph of spaces with the same underlying graph as $X',$ but with the corresponding $\Bar{V}$ in place of $V'.$ If $e'$ is an edge space of $X'$ then the edge map
$$\partial_{\pm}:e'\rightarrow V'$$ descends to a finite-degree map $\partial_{\pm}: \Bar{e}_{\pm}\rightarrow \Bar{V}.$ Because $\Bar{e}_+\rightarrow e$ and $\Bar{e}_{-}\rightarrow e$ are coverings of $e$ with the same degree, we have a finite-sheeted pre-cover $\Bar{X}.$  
By construction, $\Delta$ embeds into $\Bar{X}.$   Since the compact edge spaces are added to $\Delta$, non-isomorphic finite degree elevations are mapped into non-isomorphic elevations. This implies that $\pi _1(\bar X)$ decomposes as a graph of groups, with the same underlying graph as the decomposition of $\pi _1(X')$. 

Consider a non-abelian vertex group $\pi_1(\bar V)$   of $\pi _1(\bar X)$ (this means $V=Y$). To obtain $\pi _1(\bar V)$ we first take  $\pi_1(V'')=\pi_1(V')\ast F$,  the free product  with cyclic groups corresponding to elevations of degree $d$ obtained from infinite degree elevations of edge maps, and then by a series of extensions of centralizers and free products with free  groups.  A cyclic fundamental group of an elevation of degree $d$ obtained from an infinite degree elevation of an edge map extends the abelian fundamental group of an infinite  cover of some torus $T_i$. On the group level this corresponds to the extension of    the centralizer of an abelian free factor of $\pi _1(X')$ (and, therefore, extension of  a centralizer of $\pi _1(X')$ itself, because the extending element is in the free factor $F$ of $\pi_1(V')\ast F$. So, to obtain  $\pi _1(\bar X)$ we first extend centralizers of $\pi _1(X')$ corresponding to abelian free factors. We also extend centralizers of all $\pi_1(T')$, where $T'$ covers some $T_i$ so that all $\bar T$'s become finite covers. Denote by $X''$ the pre-cover that is obtained from $X'$ by replacing the covers of tori by  finite covers as above and  replacing $V'$ by $V''$ for each $V$ that is not a torus.
Second, we notice that the free constructions  that were applied to each $\pi _1(V')\ast F$ to obtain $\pi _1(\bar V)$, for $V'$ covering the vertex $V=Y$, can be thought as applied to the whole group $\pi _1(X'').$  Replacing each $V''$ by $\bar V$ for covers $V'$ of the non-abelian vertex group $V=Y$ we obtain $\bar X$.\end{proof}

\begin{lm}(\cite[Lemma 3.5]{Wilton}
    Let $T$ be a torus and $\delta:S^1\rightarrow T$ be an essential loop. Then for every positive integer $d$ there exists a finite-sheeted covering $\hat{T}_d\rightarrow T$ so that $\delta$ has a single elevation $\hat{\delta}$ to $\hat{T}_d$ and $\hat{\delta}$ is of degree $d.$ 
\end{lm}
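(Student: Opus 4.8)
The plan is to work entirely on the level of fundamental groups, since covers of the torus $T = S^1 \times S^1$ correspond to finite-index subgroups of $\pi_1(T) \cong \Z^2$, and elevations of the loop $\delta$ correspond to double cosets. Write $\pi_1(T) = \Z^2$ with a fixed basis, and let $v \in \Z^2$ be the primitive vector such that the class of $\delta$ is $m v$ for some $m \geq 1$ (if $\delta$ is essential, $m v \neq 0$); replacing $\delta$ by its image, we may as well track the cyclic subgroup $\langle m v\rangle$ inside $\Z^2$. First I would choose a basis of $\Z^2$ adapted to $v$: since $v$ is primitive, there is a basis $\{v, w\}$ of $\Z^2$. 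In these coordinates $\delta$ represents $(m,0)$.

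Next, given a target degree $d$, I would take the finite-index subgroup $\Lambda_d = \langle d\, v,\; w\rangle \leq \Z^2$, of index $d$, and let $\hat T_d \to T$ be the corresponding $d$-sheeted cover (again a torus). The loop $\delta$, representing $m v$, has its elevations to $\hat T_d$ classified by the double cosets $\Lambda_d \backslash \Z^2 / \langle m v\rangle$; since $\Z^2$ is abelian these are just cosets $\Z^2 / (\Lambda_d + \langle mv \rangle) = \Z^2 / \Lambda_d$ (because $mv \in \langle v\rangle$ and the image of $\langle v \rangle$ in $\Z^2/\Lambda_d$ generates that cyclic quotient of order $d$ — wait, one must be careful here). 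Let me instead argue directly: an elevation of $\delta$ corresponds to a $\langle mv\rangle$-orbit on the coset space $\Z^2/\Lambda_d$, and its degree is the index of the stabilizer, i.e. the order of the image of $mv$ in $\Z^2/\Lambda_d \cong \Z/d$. So the right choice is to pick $\Lambda_d$ so that the image of $v$ (hence of $mv$, after accounting for $\gcd(m,d)$) in the quotient has order exactly $d$ and acts transitively. The clean way: set $\Lambda_d = \langle d v + 0\cdot w,\; w\rangle$ only works when $\gcd(m,d)=1$; in general I would instead use the sublattice $\Lambda_d$ spanned by $\{dv, w\}$ together with choosing the cover so that $\langle v \rangle$ surjects onto $\Z^2/\Lambda_d$. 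Concretely, $\Z^2/\langle dv, w\rangle \cong \Z/d$, generated by the image of $v$; the image of $mv$ generates the subgroup of order $d/\gcd(m,d)$, which gives a single orbit of size $d/\gcd(m,d)$ — not quite $d$. The fix is to take instead the lattice $\Lambda'_d$ with $\Z^2/\Lambda'_d \cong \Z/d$ in which the image of $v$ has order $d$ \emph{and} $m$ is invertible mod $d$; but $m$ is fixed, so one instead takes $\Lambda_d$ of index $d$ chosen so the image of $mv$ has order $d$ — possible iff one is willing to reindex $d$, which is exactly what Wilton's statement allows by ranging over "every positive integer $d$" after a harmless reparametrization. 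So: replace $\delta$ by its primitive root $v$ (degree scales by $m$), produce for $v$ the cover realizing a single elevation of degree $d$ via $\Lambda_d = \langle dv, w\rangle$, and observe the original $\delta = mv$ then has a single elevation of degree $d$ as well since $\langle v\rangle$ already acts transitively on $\Z^2/\Lambda_d$.

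Finally I would verify the two required conclusions. Transitivity of the $\langle v \rangle$-action (hence of the $\langle \delta \rangle$-action) on $\Z^2/\Lambda_d \cong \Z/d$ is immediate because $v$ maps to a generator; this gives the \emph{single} elevation $\hat\delta$. Its degree equals the index of the stabilizer of a point, which is $|\Z^2/\Lambda_d| = d$ divided by the orbit size; since the orbit is everything, the stabilizer is $\langle \delta\rangle \cap \Lambda_d$, whose index in $\langle \delta \rangle$ is $d$, so $\hat\delta$ has degree $d$ as claimed. I would also note $\hat T_d = \Z^2/\Lambda_d$-cover is again a torus, so this is a legitimate step in building ICE spaces. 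The main subtlety — and the only place care is genuinely needed — is the bookkeeping with $m = $ (the non-primitivity of $[\delta]$) and $\gcd(m,d)$; the cleanest route is to reduce at the outset to the primitive case, after which the lattice $\langle dv, w\rangle$ works verbatim and no divisibility hypotheses are needed.
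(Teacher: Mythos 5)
Your translation of the problem into lattice terms is the right (and standard) one, and for a \emph{primitive} loop your argument is correct and complete: covers of $T$ correspond to finite-index subgroups $\Lambda\leq\pi_1(T)\cong\mathbb{Z}^n$, elevations of $\delta$ are the $\langle[\delta]\rangle$-orbits on $\mathbb{Z}^n/\Lambda$, the degree of each elevation is the order of the image of $[\delta]$ in $\mathbb{Z}^n/\Lambda$, and for $[\delta]=v$ primitive the sublattice $\Lambda_d=\langle dv,w_2,\dots,w_n\rangle$ (complete $v$ to a basis; note the torus may have dimension $n>2$ here, so you should not fix $n=2$) gives exactly one elevation, of degree $d$. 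Since the paper only cites Wilton for this lemma and the loops it is applied to are coordinate circles and generators of maximal cyclic subgroups, this primitive case is all that is actually needed.

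The genuine gap is your treatment of the non-primitive case, and it cannot be repaired in the way you suggest. If $[\delta]=mv$ with $v$ primitive and $m>1$, then in your cover $\Lambda_d=\langle dv,w_2,\dots,w_n\rangle$ the relevant action is that of $\langle mv\rangle$, not of $\langle v\rangle$: the image of $mv$ in $\mathbb{Z}^n/\Lambda_d\cong\mathbb{Z}/d$ generates only the subgroup of order $d/\gcd(m,d)$, so $\delta$ has $\gcd(m,d)$ elevations, each of degree $d/\gcd(m,d)$. Your step ``transitivity of the $\langle v\rangle$-action, hence of the $\langle\delta\rangle$-action'' is exactly where this fails, and the stabilizer index is $d/\gcd(m,d)$, not $d$. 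Moreover no other choice of $\Lambda$ helps: a single elevation of degree $d$ forces $\mathbb{Z}^n/\Lambda$ to be cyclic of order $d$ generated by the image of $mv$; then the image of $v$ is also a generator, so the image of $mv$ has order $d/\gcd(m,d)$, which equals $d$ only if $\gcd(m,d)=1$. Concretely, for $[\delta]=2v$ and $d=2$ there is no finite cover with a single elevation of degree $2$. So the statement read for an arbitrary essential loop and every $d$ is false, and your ``pass to the primitive root and reindex $d$'' move does not prove it for $\delta$ itself; the correct statement (and the one the paper uses) requires $[\delta]$ primitive, or else concludes $\gcd(m,d)$ elevations of degree $d/\gcd(m,d)$.
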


\begin{lm}(cf \cite[Lemma 3.6]{Wilton})\label{lm3.6}
    Let $Y$ be a space such that $\pi_1(Y)$ has local GICE structure and $\delta:S^1\rightarrow Y$ be a based essential loop. Then for every positive integer $d$ there exists a finite-sheeted covering $\hat{Y}_d\rightarrow Y$ so that $\delta$ has an elevation $\hat{\delta}$ of degree $d$ to $\hat{Y}_d$ and  $\pi_1(\hat{Y}_d)$  is an $\gen{\hat{\delta}}$-GICE group. 
\end{lm}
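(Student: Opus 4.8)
The plan is to deduce the lemma almost directly from the local GICE structure of $\pi_1(Y)$, applied to the infinite cyclic subgroup generated by the $d$-th power of $\delta$, with the lower powers of $\delta$ supplied as the elements to be avoided. First I would set $c=[\delta]\in\pi_1(Y,y_0)$. Since $\delta$ is essential, $c\neq 1$, and since $\pi_1(Y)$ is torsion-free in all the settings in which this lemma is invoked ($Y$ being an ICE space or a pre-cover of one, whose fundamental group is a subgroup of an ICE group), $c$ has infinite order; hence $C_d:=\gen{c^d}\cong\mathbb{Z}$ is finitely generated and $c^j\notin C_d$ whenever $0<j<d$.

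Next I would apply the local GICE structure of $\pi_1(Y)$ to the finitely generated subgroup $H=C_d$ together with the finite set $\set{c,c^2,\ldots,c^{d-1}}$, all of whose members lie outside $C_d$. This produces a finite-index subgroup $K\leq\pi_1(Y,y_0)$ with $C_d\leq K$, with $c^j\notin K$ for $0<j<d$, and with $K$ a $C_d$-GICE group. Let $\hat Y_d\to Y$ be the connected finite-sheeted covering corresponding to $K$, with basepoint $\hat y$ over $y_0$ chosen so that $\pi_1(\hat Y_d,\hat y)$ is identified with $K$ inside $\pi_1(Y,y_0)$.

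Finally I would read off the elevation of $\delta$ at $\hat y$: by definition it is built from the smallest connected cover of $S^1$ through which $\delta$ lifts to a loop at $\hat y$, that is, from the least $k>0$ with $c^k\in K$. Since $c^d\in K$ while $c,c^2,\ldots,c^{d-1}\notin K$, this least value is $k=d$, so $\delta$ has an elevation $\hat\delta$ to $\hat Y_d$ of degree exactly $d$, and the class of $\hat\delta$ in $\pi_1(\hat Y_d,\hat y)=K$ equals $c^d$. Therefore $\gen{\hat\delta}=C_d$ as a subgroup of $\pi_1(\hat Y_d)$, and $\pi_1(\hat Y_d)=K$ is a $\gen{\hat\delta}$-GICE group, which is exactly the conclusion (for $d=1$ the avoided set is empty and the same argument applies with $C_1=\gen{c}$).

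I expect the only point that needs care to be the bookkeeping that forces the elevation to have degree precisely $d$ rather than a proper divisor of $d$; this is exactly why we must be allowed to prescribe the avoided elements $c,\ldots,c^{d-1}$ when invoking the hypothesis, and it is the place where the structured form of subgroup separability packaged into ``local GICE structure'' is doing the work. (In Wilton's analogue \cite[Lemma 3.6]{Wilton}, which yields only a virtual retraction, this separation instead has to be arranged through a separate appeal to subgroup separability of limit groups.)
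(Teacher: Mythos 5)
Your argument is correct and is essentially the paper's own proof: the paper likewise applies the local GICE structure to the subgroup $\gen{\delta^d}$ while excluding $\delta^k$ for $0<k<d$, so that the resulting finite-index subgroup is a $\gen{\delta^d}$-GICE group and the corresponding elevation has degree exactly $d$. Your write-up merely makes explicit the bookkeeping (infinite order of $[\delta]$, the identification $\gen{\hat\delta}=\gen{\delta^d}$ in $\pi_1(\hat Y_d)$) that the paper leaves implicit.
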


\begin{proof} Because $\pi_1(Y)$ has local GICE structure, 
for every positive integer $d$ there exists a finite-sheeted covering $\hat{Y}_d\rightarrow Y$ so that $\hat{Y}_d\rightarrow Y$ is a $\gen{{\delta ^d}}$-GICE group. Note that $\delta ^k\not\in \pi_1(\hat{Y}_d)$ for $0<k<d$, therefore $\hat\delta$  is an elevation of degree $d$.\end{proof}


\begin{prop} (cf \cite[Proposition 3.7]{Wilton})\label{prop3.7}\textit{(Completing a finite-sheeted pre-cover to a cover)} Let $X$ be an ICE space constructed by gluing together tori $T_1,\ldots ,T_k$ and a simpler ICE space $Y,$  as above. Assume that $\pi_1(Y)$ admits a local GICE structure. Let $\Bar{X}\rightarrow X$ be a finite-sheeted connected pre-covering. Then there exists an inclusion $\Bar{X}\hookrightarrow \hat{X}$ extending $\Bar{X}\rightarrow X$  to a covering $\hat{X}\rightarrow X$ such that $\pi_1(\hat{X})$ is a  $\pi_1(\Bar{X})-GICE$ group.
    
\end{prop}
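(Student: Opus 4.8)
The plan is to mimic the proof of \cite[Proposition 3.7]{Wilton}, completing the pre-cover $\bar X$ to an honest cover one vertex space and one edge at a time, but keeping track at each stage of the fact that the fundamental group we are building remains a $\pi_1(\bar X)$-GICE group rather than merely admitting a retraction. Recall that $\bar X$ is a finite-sheeted pre-cover: every vertex space and edge space covers the corresponding space downstairs, but the edge maps need not be surjective onto the edge spaces of $X$, so $\bar X$ fails to be a genuine covering space only because some edge circles are ``dangling''. To repair this, for each such dangling edge of $\bar X$ — say an edge $e$ of $X$ joining vertex spaces (one of which may be a torus $T_i$, the other a copy of $Y$) whose elevation to $\bar X$ does not meet every lift it should — we want to attach, over the missing side, a finite cover of the appropriate vertex space in which the relevant loop has a single elevation of the correct degree.

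Concretely, I would proceed as follows. First, enumerate the finitely many ``missing'' elevations of edge maps that prevent $\bar X$ from being a cover. For each one attached on a torus side, use \cite[Lemma 3.5]{Wilton} to produce a finite cover $\hat T_{i,d}$ of $T_i$ in which the relevant essential loop has a single elevation of degree $d$; for each one attached on the $Y$-side, use Lemma \ref{lm3.6} (which applies because $\pi_1(Y)$ has local GICE structure by hypothesis) to produce a finite cover $\hat Y_d$ of $Y$ in which the relevant loop has an elevation $\hat\delta$ of degree $d$ and $\pi_1(\hat Y_d)$ is an $\langle\hat\delta\rangle$-GICE group. Choosing $d$ the same across all these finitely many repairs (legitimate since ``for all sufficiently large $d$'' holds in each), glue each such finite cover onto $\bar X$ along the dangling edge circle via the degree-$d$ elevation. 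After all gluings the result $\hat X$ is a genuine finite-sheeted covering of $X$ extending $\bar X\to X$, and $\bar X\hookrightarrow\hat X$ by construction.

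It remains to see that $\pi_1(\hat X)$ is a $\pi_1(\bar X)$-GICE group. Here I would argue by a graph-of-groups bookkeeping, exactly parallel to the second half of the proof of Proposition \ref{prop3.4}: passing from $\pi_1(\bar X)$ to $\pi_1(\hat X)$ is achieved by a sequence of moves, each of which is either (a) a free product with a free group — this occurs when we glue on a repaired $Y$-side cover, since $\pi_1(\hat Y_d)$ is an $\langle\hat\delta\rangle$-GICE group over the edge subgroup, so gluing amounts to an amalgam along a cyclic subgroup that, on the level of the ambient group, splits off as a free factor generated by the new elevation — or (b) an extension of a centralizer — this occurs when we glue on a repaired torus cover $\hat T_{i,d}$, since adjoining a torus along a single elevation of an essential loop extends the centralizer of that loop's image. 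One must verify that the order in which these moves are performed is admissible, i.e.\ that at each stage the element whose centralizer is being extended, or the subgroup being freely factored off, already lies in the group constructed so far; this is arranged by first performing all the torus-side extensions (which only enlarge abelian pieces) and then the $Y$-side free products, just as in Proposition \ref{prop3.4}. The main obstacle is precisely this ordering/compatibility check: one must ensure that the GICE moves coming from different dangling edges do not interfere, and that gluing a repaired $Y$-side cover, whose internal GICE structure is only guaranteed relative to the single loop $\hat\delta$, is compatible with the GICE structure already built on the rest of $\hat X$ — this is where the \emph{strong} tameness packaging and the freeness of the basis $\{\hat\delta_{j*}(\pi_1(\hat C_j))\}$ do the real work, allowing the local pieces to be amalgamated into one global chain of centralizer extensions and free products starting from $\pi_1(\bar X)$.
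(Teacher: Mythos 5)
Your proposal takes essentially the same route as the paper's own (very brief) proof: complete the pre-cover exactly as in Wilton's Proposition 3.7, using his Lemma 3.5 on the torus side and Lemma \ref{lm3.6} (via the local GICE structure of $\pi_1(Y)$) on the $Y$-side, and then interpret the attached tori as extensions of centralizers and the attached $\hat Y_d$'s as free-product-plus-GICE moves coming from the $\langle\hat\delta\rangle$-GICE structure of $\pi_1(\hat Y_d)$. The paper's argument is even terser than yours, simply citing Wilton's proof and stating these two correspondences, so your extra remarks on ordering the moves are added detail rather than a different approach.
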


\begin{proof} Follows the proof of \cite[Proposition 3.7]{Wilton}. The addition of copies of $T_{i,d}$ correspond to extensions of centralizers. The addition of $Y_d$'s correspond by Lemma \ref{lm3.6} to taking a free product with infinite cyclic group and then a GICE over the obtained  group. Indeed, $\pi _1(Y)$ has a local GICE-structure, therefore $Y_d$ is 
$C-GICE$  group, where $C$ is a cyclic group generated by the  boundary element. \end{proof}

A collection of elements $g_1,\ldots ,g_n$ of a group $G$ is called {\em independent} if whenever there exists $h\in G$ such that $g_i^h$ and $g_j$ commute, then, in fact, $i=j.$
\begin{prop}(cf \cite[Proposition 3.8]{Wilton}) Let $X$ be an ICE space constructed by gluing together tori $T_1,\ldots ,T_k$ and a simpler ICE space $Y$. Let $H\leq \pi_1(X)$ be a finitely generated subgroup and let $X^H\rightarrow X$ be the corresponding covering. Suppose $\mathcal L$ is a (possible empty) set of hyperbolic loops that generate maximal cyclic subgroups of $\pi _1(X)$. Then $X^H$ is strongly tame over $\mathcal L$. \end{prop}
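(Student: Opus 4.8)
The plan is to prove the statement by induction on the number of steps $k$ in the modified centralizer chain (\ref{chain2}) used to build $\pi_1(X)$ as an ICE-group, mirroring the structure of \cite[Proposition 3.8]{Wilton} but tracking the stronger conclusion. The base case is when $\pi_1(X)=\pi_1(Y)$ is free (i.e.\ $X$ has no tori glued on, $k=0$): here $\mathcal L$ is a set of hyperbolic loops generating maximal cyclic subgroups, and one must check that any covering of a graph is strongly tame over such a collection. Tameness itself is the content of the free case of \cite[Proposition 3.8]{Wilton}; for strong tameness one additionally observes that in the tower $X'\to\hat X\to X$ the intermediate cover $\hat X$ has free fundamental group, and $\pi_1(\hat X)$ is then visibly a $(\pi_1(X')\ast F)$-GICE group because a free group that is a finite-index overgroup of $\pi_1(X')\ast F$ containing the cyclic factors $\{\hat\delta_{j\ast}(\pi_1(\hat C_j))\}$ as a free factor decomposition arises by free products with $\Z$ (indeed any subgroup separation in free groups is realized by such moves, and the retraction required in item (4) of the tameness definition comes for free from the retraction onto a free factor).

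For the inductive step, write $X$ as obtained by gluing tori $T_1,\dots,T_k$ to a simpler ICE space $Y$, where by induction we may assume $\pi_1(Y)$ admits a local GICE structure (this is exactly what the chain of Propositions \ref{prop3.4}, \ref{prop3.7} together with Lemma \ref{lm3.6} is set up to deliver: $\pi_1(Y)$ separates finitely generated subgroups inside $H$-GICE finite-index overgroups). Given the finitely generated $H\le\pi_1(X)$, form $X^H\to X$; it carries a graph of spaces decomposition whose vertex spaces are components of preimages of $Y$ and of the $T_i$, and whose edge spaces are elevations of the gluing loops. To verify strong tameness over $\mathcal L$: take a finite subcomplex $\Delta\subset X^H$, pass (enlarging to a core $X'$) to the situation of Proposition \ref{prop3.4}, and apply it to get a finite-sheeted intermediate pre-cover $\bar X$ with $\Delta\hookrightarrow\bar X$ and $\pi_1(\bar X)$ a $\pi_1(X')$-GICE group; then apply Proposition \ref{prop3.7} to complete $\bar X$ to an actual finite-sheeted cover $\hat X\to X$ with $\pi_1(\hat X)$ a $\pi_1(\bar X)$-GICE group. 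Composing, $\pi_1(\hat X)$ is a $\pi_1(X')$-GICE group, hence a $\pi_1(X^H)=H$-GICE group since $X'\hookrightarrow X^H$ is a $\pi_1$-isomorphism; one still has to check the elevation-descent conditions (1)–(3) and the retraction condition (4) from the definition of tameness, which come out of the explicit covers produced in \ref{prop3.4} and \ref{prop3.7} (each torus cover $\hat T_{i,d}$ realizes a single degree-$d$ elevation by Lemma \ref{lm3.5}, and the retraction onto $\pi_1(X')$ is built from the retractions onto free factors introduced at each free-product-with-$\Z$ step). Finally, for strong tameness one notes that the $F$ in the definition has basis exactly the cyclic groups $\{\hat\delta_{j\ast}(\pi_1(\hat C_j))\}$ generated by these degree-$d$ elevations, which is precisely how $\pi_1(\hat X)$ is assembled in Proposition \ref{prop3.7} — first a free product with those infinite cyclic groups, then a GICE tower over the result.

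The independence and maximality hypotheses on $\mathcal L$ enter to guarantee that distinct loops in $\mathcal L$, and distinct infinite-degree elevations of a single loop, have non-conjugate (indeed non-commensurable) images, so that extending their centralizers at different steps does not create collisions — this is what lets the degree-$d$ elevations $\hat\delta_j$ remain pairwise non-isomorphic (item (2)) and lets the centralizer extensions in the GICE tower be performed at genuinely distinct maximal cyclic subgroups. The main obstacle I expect is bookkeeping in the inductive step: ensuring that the finite-index overgroup $\pi_1(\hat X)$ is simultaneously (a) an honest cover of $X$ containing $X'$, (b) built as a $\pi_1(X')$-GICE group with the free factor basis matched to the required elevations, and (c) equipped with a retraction compatible with all $\hat C_j$ at once, uniformly over all vertex spaces with a common degree $d$. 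The delicate point is the coherence of the choice of $d$ across vertex spaces (handled as in \ref{prop3.4}) combined with the need for the GICE structure at the non-abelian vertex $Y$ to be "absorbable" into a GICE structure on the whole group — which is exactly the two-stage argument (first extend centralizers corresponding to abelian free factors and to all torus covers, then perform the $Y$-local free constructions globally) already carried out in the proof of Proposition \ref{prop3.4}, and which here must additionally be checked to preserve the strong-tameness decoration.
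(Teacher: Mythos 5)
Your skeleton — induction on the length of chain (\ref{chain2}), passing to a core $X'$, applying Proposition \ref{prop3.4}, completing via Proposition \ref{prop3.7}, and using transitivity of the GICE property — is the same as the paper's. But there is a genuine gap in how you treat the loops of $\mathcal L$. Strong tameness is a statement about a prescribed finite family of pairwise non-isomorphic infinite-degree elevations $\delta_j'$ of the hyperbolic loops in $\mathcal L$: the intermediate finite-sheeted cover must make each $\delta_j'$ descend to a degree-$d$ elevation $\hat\delta_j$, keep them pairwise non-isomorphic, admit the retraction of item (4), and have fundamental group a $(\pi_1(X')\ast F)$-GICE group where $F$ is free precisely on the subgroups $\hat\delta_{j\ast}(\pi_1(\hat C_j))$. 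You claim these come ``out of the explicit covers produced in \ref{prop3.4} and \ref{prop3.7}'' and out of the torus-cover lemma, but those statements only concern elevations of the edge maps of the graph-of-spaces decomposition (the attaching circles of the tori $T_i$); the loops of $\mathcal L$ are hyperbolic and generate maximal cyclic subgroups, so they are not conjugate into the edge groups and are simply not addressed by that machinery. As written, nothing in your argument produces the degree-$d$ elevations $\hat\delta_j$, preserves their non-isomorphism, or exhibits $F$ with the prescribed basis, so neither tameness conditions (1), (2), (4) nor the ``strong'' part is established.

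The missing step, which the paper takes from Wilton, is performed \emph{before} Proposition \ref{prop3.4} is invoked: restrict the infinite-degree elevations $\delta_i'$ to the core and make them disparate (\cite[Lemma 2.24]{Wilton}), then extend $X'$ to a pre-cover $\bar X$ in which these restricted elevations are completed to full degree-$d$ elevations $\bar\delta_j$ (\cite[Lemma 2.23]{Wilton}); that lemma also gives $\pi_1(\bar X)=\pi_1(X')\ast F$ with $F$ free on exactly the subgroups carried by the $\bar\delta_j$. One then enlarges $\Delta$ to contain the images of the $\bar\delta_j$, so that after Propositions \ref{prop3.4} and \ref{prop3.7} the elevations persist unchanged ($\hat\delta_j=\bar\delta_j$), which yields (1)--(3), while $\pi_1(\hat X^+)$ being a $\pi_1(\bar X)$-GICE group, i.e.\ a $(\pi_1(X')\ast F)$-GICE group with the correct $F$, gives the strong-tameness conclusion and the retraction of (4). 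Your base case is essentially the paper's, which simply cites \cite[Corollary 1.8]{Wilton} to get $\pi_1(\hat X)=H\ast F$; the informal claim that ``any subgroup separation in free groups is realized by free products with $\mathbb Z$'' should be replaced by that citation.
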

\begin{proof}
The proof is an induction on the length of the chain (\ref{chain2}).  Notice, that the induction basis holds by \cite[Corollary 1.8]{Wilton}. Indeed, if $H$ is a finitely generated subgroup of $\pi _1(X),$ where $X$ is a graph, then the cover $X^H$ is strongly tame over the set of independent elements $\{\gamma _i\}$ such that each generate a maximal cyclic subgroup, because it is tame and for a finite-sheeted intermediate covering 
$$X^H\rightarrow \hat X\rightarrow X$$ 
 $\pi _1(\hat X)=H\ast F$, where $F$ is a  free group. 
 
 Fix a finitely generated non-abelian subgroup  $H\subset\pi_1(X)$, and let  $X^H\rightarrow X$ be the corresponding covering. There exists a core  $X'\subseteq X^H$. Let $\Delta\subset X^H$ be a finite subcomplex. Enlarging $X'$ if necessary we can assume $\Delta\subset X'$, infinite degree elevations of hyperbolic loops $\{\delta _i\}$ are first restricted to  elevations $\{{\delta _i}'\}$ and then made disparate. This is possible  by  \cite[Lemma 2.24]{Wilton}  without changing the fundamental group.

As in the proof of \cite[Theorem 3.8]{Wilton},  $X'$ is  extended to a pre-cover $\bar X$ where elevations $\{{\delta _i}'\}$ are extended to full elevations $\bar\delta _j:\bar D_j\rightarrow \bar X$ of degree $d$
 by \cite[Lemma 2.23]{Wilton}. By \cite[Lemma 2.23]{Wilton}, $\pi _1(\bar X)=\pi _1(X')\ast F$, where $F$ is a free group generated by $\pi_1(\bar \delta _{i\ast}(\bar D_j))$'s. 
Enlarging $\Delta$ again we assume that the images of the $\bar\delta _j$ are contained in $\Delta$.


\noindent By Proposition \ref{prop3.4} there exists an intermediate finite-sheeted pre-covering 
$$\bar X\rightarrow\hat X\rightarrow X,$$ into which $\Delta$ injects.  Since $\Delta$  injects into $\hat X$ we have that $\bar\delta _j$ descends to an elevation $\hat \delta _j=\bar\delta _j$. 

Finally, $\hat X$ can be extended to a finite sheeted covering $\hat X^+$ by Proposition \ref{prop3.7}. 

We have that $\Delta$ injects into $\hat X^+$. By Proposition \ref{prop3.7}, $\pi _1(\hat X^+)$ is a  $\pi _1(\hat X)$-GICE group. By Proposition \ref{prop3.4}, $\pi _1(\hat X)$ is a  $\pi _1(\bar X)$-GICE group. And $\pi _1(\bar X)=\pi _1(X')\ast F$. Therefore, by transitivity, $\pi _1(\hat X^+)$ is a  $\pi _1(X')\ast F$-GICE group. Since $H=\pi _1(X'),$ the proposition is proved.
\end{proof}

\noindent  Theorem \ref{th3} follows from the proposition  (with the empty set $\mathcal L$). Since every limit group is a subgroup of an ICE-group by \cite{KM2}, Corollary \ref{th6} follows from Theorem \ref{th3}. If $H$ is non-abelian, then
$H$-GICE groups are fully residually $H$ and subgroups of fully residually $H$ groups that contain $H$ are also fully residually $H$. Therefore   Theorem \ref{th2} follows from Theorem \ref{th3}. \vspace{2mm}

\noindent {\bf Example 1}. Let us illustrate the proof of Theorem \ref{th2} with an example when $L$ is just an extension of a centralizer of a free group. Consider the group $$L=F(a,b)\ast_{\langle a\rangle}\langle a,t,|[a,t]=1\rangle, $$ where $F(a,b)$ is a free group, a subgroup $$H=\langle a^2,b^2\rangle \ast _{\langle a^2\rangle}{\langle a^2\rangle}\ast_{\langle a^2\rangle}{\langle a^{2t},b^{2t}\rangle}$$ and $g=\Delta =b \not \in H.$  Let us construct a finite-index subgroup $K$ such that $H\leq K$, $b\not\in K$ and $K$ is an $H$-GICE group. 

In Fig. \ref{Pre} we show the space $X$ such that $L=\pi _1(X)$. Here $X$ is a graph of spaces with one edge and two vertices. The loops labelled by $a$ and $t$ are generating loops of the torus $T$ with a fundamental group $\langle a,t,|[a,t]=1\rangle $ and the bouquet of loops labelled by $a$ and $b$ has a fundamental group $F(a,b)$.  
A pre-cover $X'$ corresponding to $H$ is  a pre-cover with the finite graph. It is a graph of spaces with two edges and three vertices,  $H=\pi _1(X').$
The space corresponding to the vertex in the middle is the cylinder that is an infinite cover of the torus $T$. The other two vertex spaces are infinite covers of the bouquet of loops. 
\begin{figure}[ht!]
\centering
\includegraphics[width=0.5\textwidth]{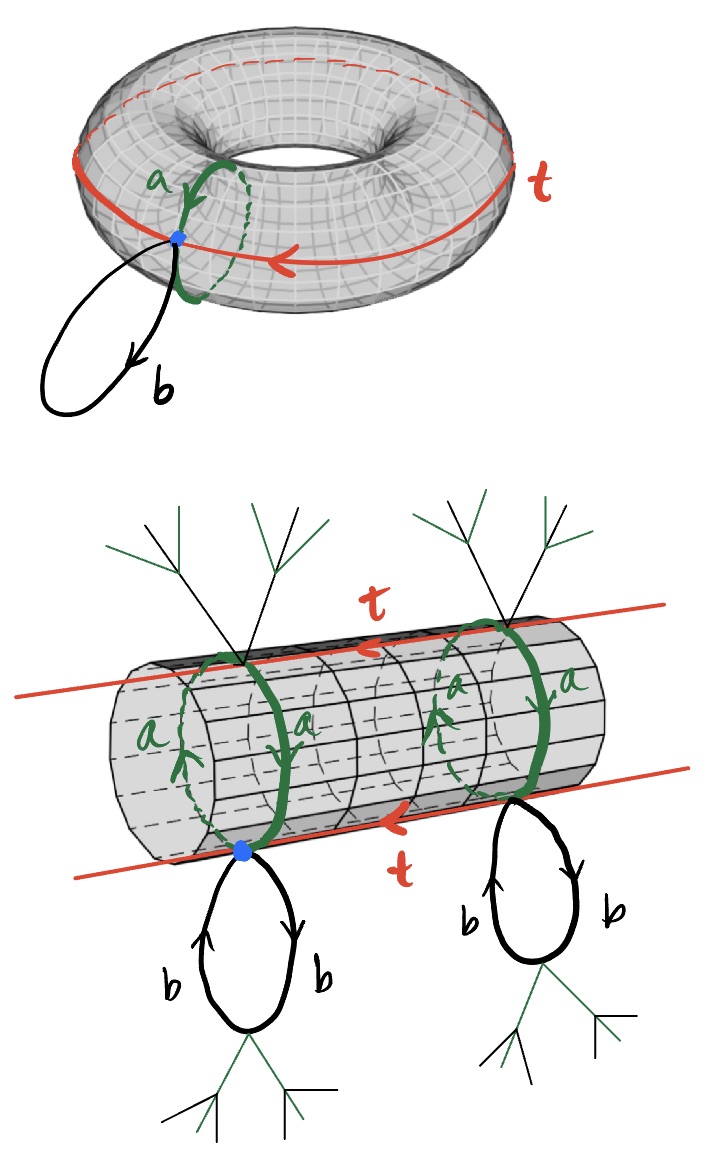}
\caption{ICE space $X$ and a pre-cover $X'$ with a  finite graph}
\label{Pre}
\end{figure}
\newpage
In Fig. \ref{FSPre} we make $X'$ into a finite-sheeted pre-cover $\bar X$ as it is done in Proposition \ref{prop3.4}. The space $\bar X$ has the same underlying graph as $X'$, but the vertex spaces are now finite covers of the vertex spaces of $X$. The torus with the fundamental group generated by $a^2,t^2$ is a cover of $T$ of degree 4. Two other vertex spaces are graphs that are covers of degree 3 of the bouquet of loops in $X$. We have $$\pi _1(\bar X)= \langle a^2,b^2,a^{-1}ba, b^{-1}ab\rangle \ast _{\langle a^2\rangle} \langle a^2,t^2\rangle\ast_{\langle a^2\rangle}\langle a^{2t},b^{2t},t^{-1}a^{-1}bat, t^{-1}b^{-1}abt\rangle.$$  We have that $\pi _1(\bar X)$ is obtained from $H$ by taking a free product with $\langle a^{-1}ba, b^{-1}ab\rangle$ and 
$\langle t^{-1}a^{-1}bat, t^{-1}b^{-1}abt\rangle$ and then extending the centralizer of $a^2$ by $t^2$. There are two hanging elevations of the loop labelled by $a$ in $\bar X$. They both have degree 1. 
\begin{figure}[ht!]
\centering
\includegraphics[width=0.5\textwidth]{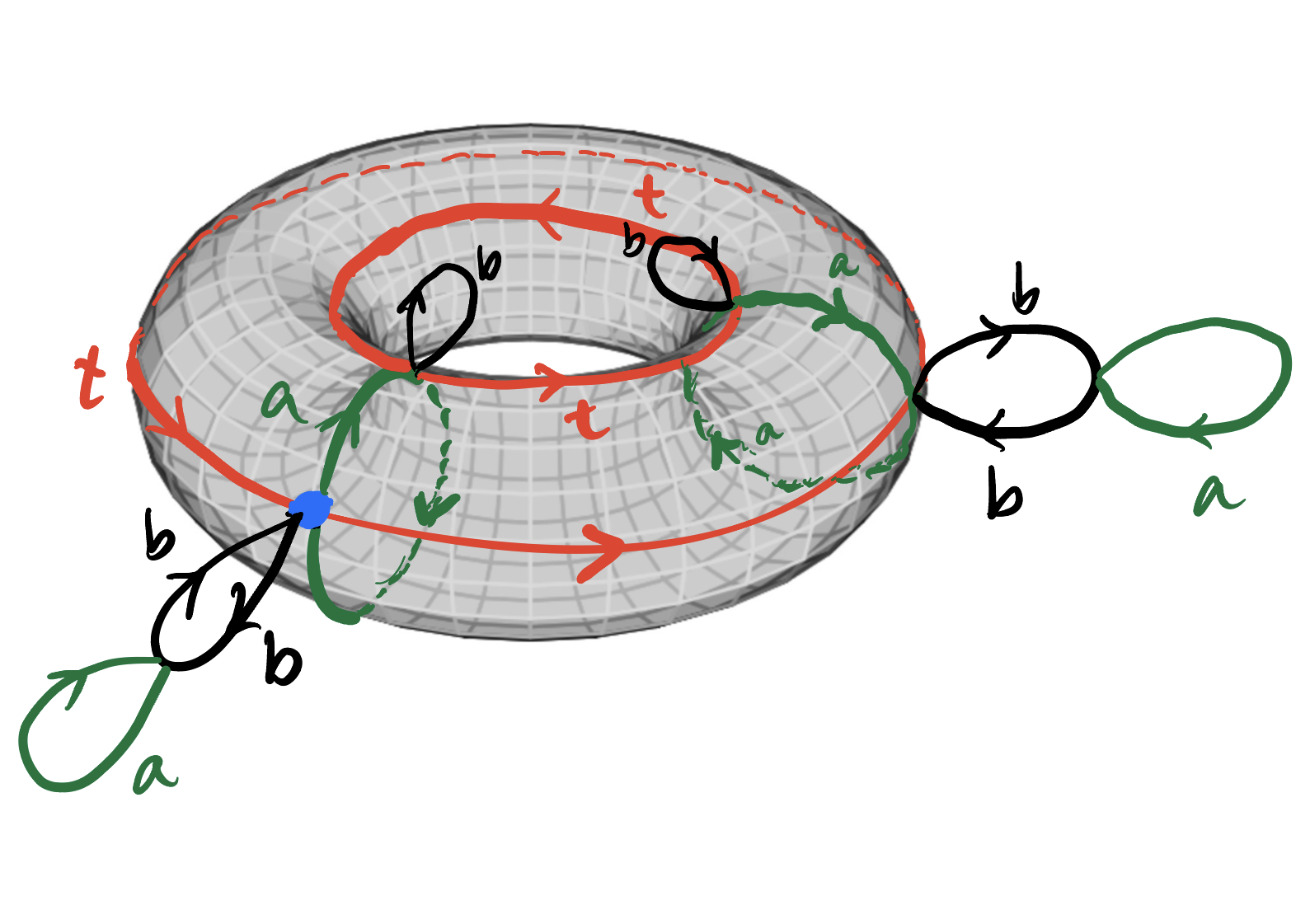}
\caption{Finite-sheeted pre-cover $\bar X$}
\label{FSPre}
\end{figure}
\newpage
\noindent Figure \ref{cover} shows a finite cover $\hat X$ of $X$. It is obtaiinef from $\bar X$ by attaching two tori $T_1$ to the hanging elevations of the loop labelled by $a$ (as in Proposition \ref{prop3.7}). Then $K=\pi _1(\hat X)$ is obtained from $\pi _1(\bar X)$ by extending centralizers of $b^{-1}ab$ (by $b^{-1}tb$) and of $t^{-1}b^{-1}abt$ (by $t^{-1}b^{-1}tbt$). Therefore $K$ is an $H$-GICE group, $b\not \in K$. Notice that $[L:K]=6.$

\begin{figure}[ht!]
\centering
\includegraphics[width=0.6\textwidth]{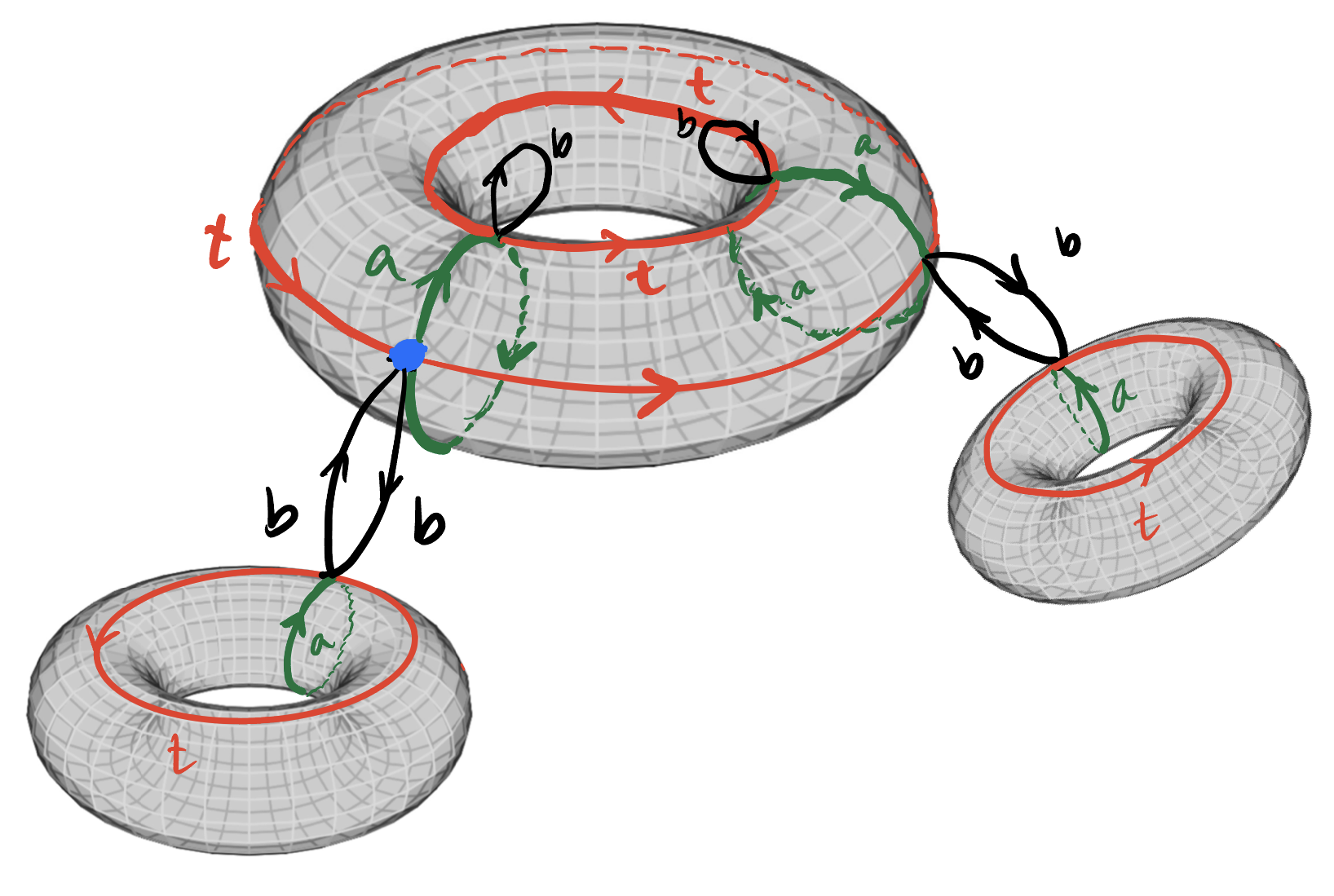}
\caption{Finite cover $\hat X$}
\label{cover}
\end{figure}

{\bf Example 2} (Figures \ref{Pre2}-\ref{cover2}) Now with the same $L$ we take $$H=\langle a^2,b^2\rangle \ast _{\langle a^2\rangle}{\langle a^2\rangle}\ast_{\langle a^2\rangle}{\langle a^{2t},b^{2t}\rangle}\ast \langle t^{ba}\rangle$$ and  $g=\Delta =b \not \in H.$  In this example we will have an edge in $X'$ corresponding to an infinite degree elevation of the loop labelled by $a$. Then
$\pi _1(\bar X)$ is obtained from $H$ by the following chain:
$H< H_1$, where $$H_1=\langle H, a^{ba}, t^2|[a^{ba},t^{ba}]=1, [a^2,t^2]=1\rangle, $$
$H_1<\pi _1(\bar X)$, where
$$\pi _1(\bar X)=H_1\ast\langle a^t, a^{b^{-1}a}, b^{3a},b^{at},a^{bt}\rangle ,$$ where the last group is freely generated by the five given elements. So, to obtain $pi _1(\bar X)$ from $H$ we used two centralizer extensions and then a free product with a free group. To obtain $K=\pi _1(\hat X)$ from $pi _1(\bar X)$ we make three centralizer extensions, as Figure \ref{cover2} shows and obtain s finite cover of $X$ of degree $8$.

\begin{figure}[ht!]
\centering
\includegraphics[width=0.6\textwidth]{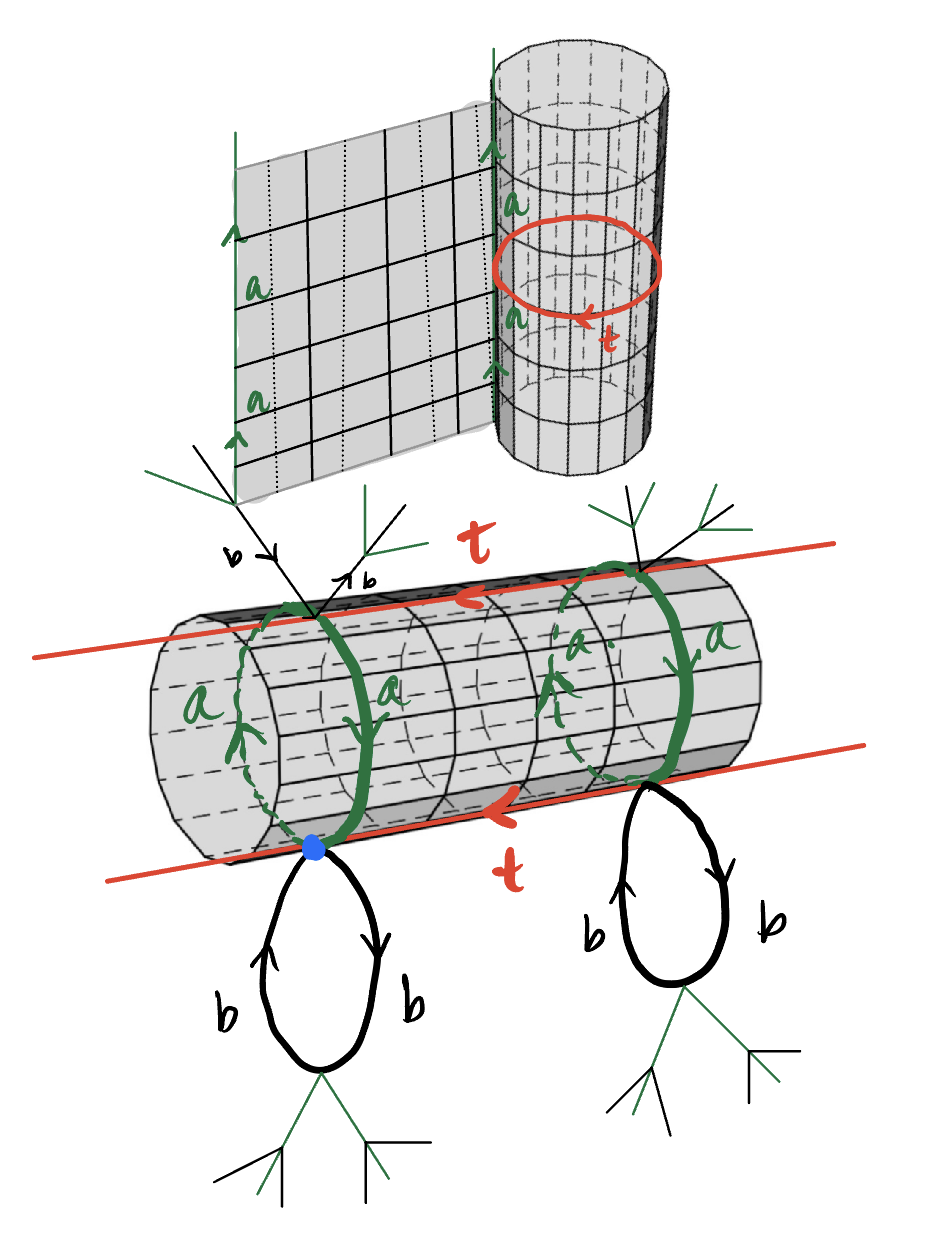}
\caption{Pre-cover $X'$ with finite graph}
\label{Pre2}
\end{figure}
\begin{figure}[ht!]
\centering
\includegraphics[width=0.6\textwidth]{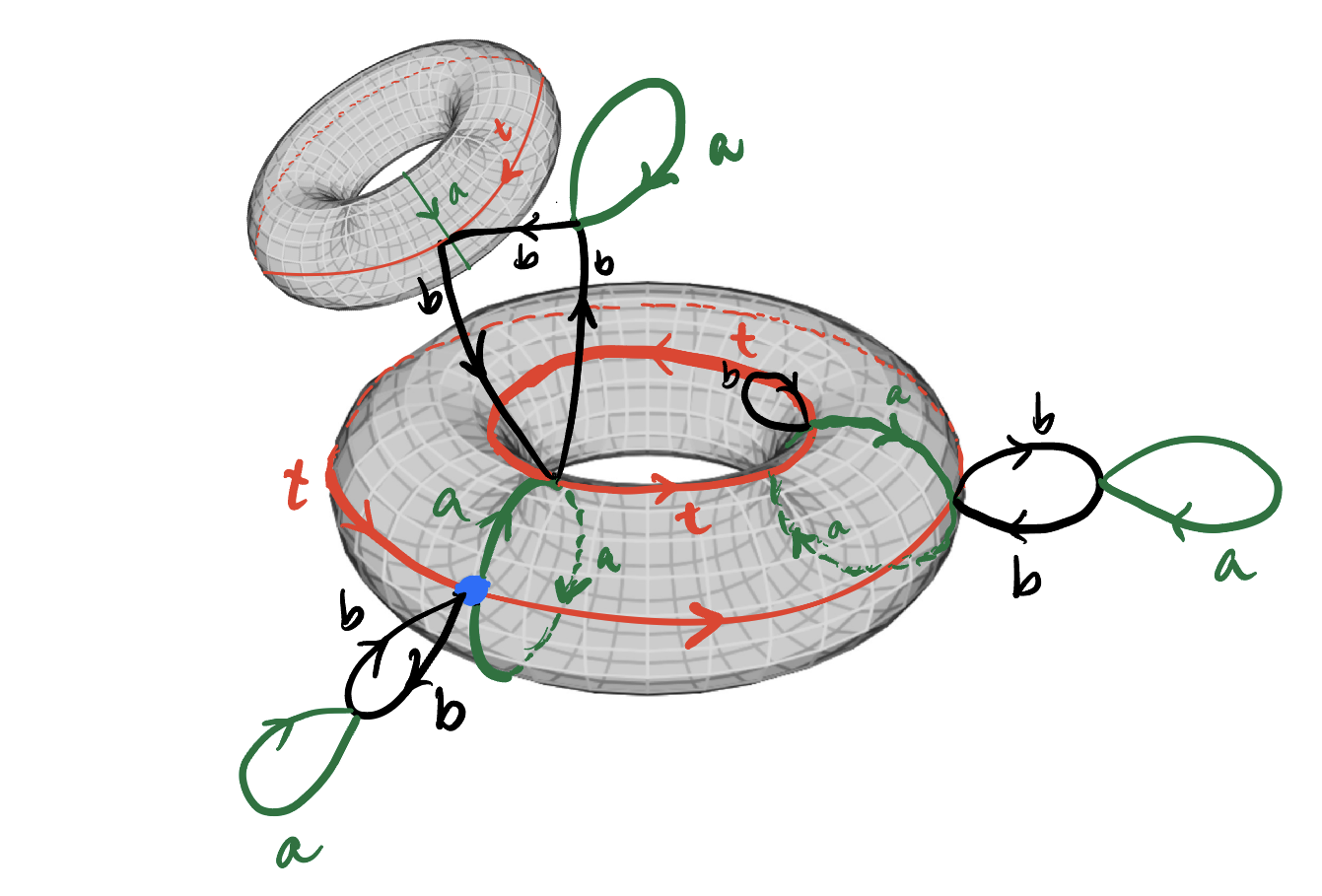}
\caption{Finite sheeted pre-cover $\bar X$}
\label{FSP2}
\end{figure}
\begin{figure}[ht!]
\centering
\includegraphics[width=0.6\textwidth]{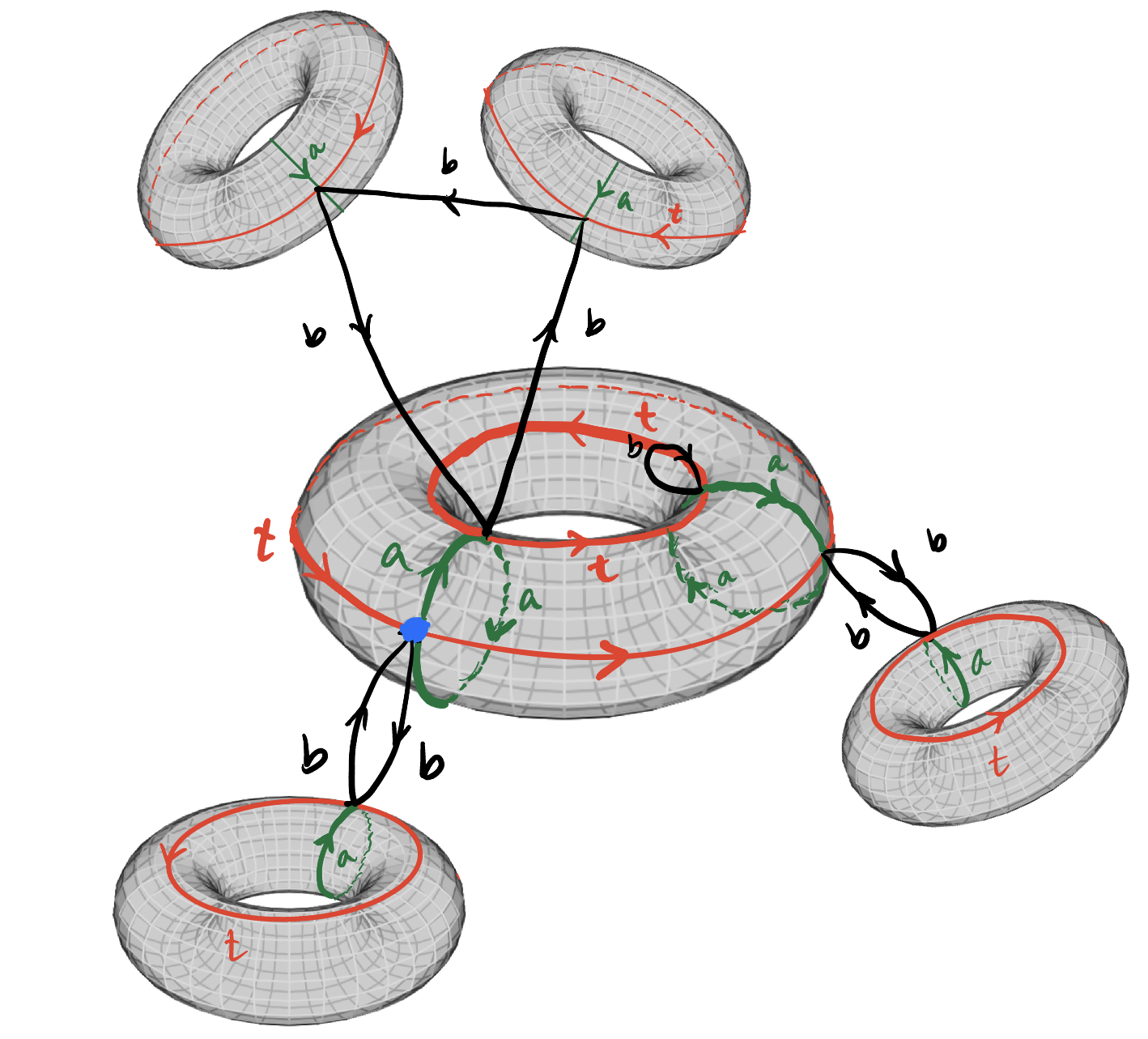}
\caption{Finite cover $\hat X$}
\label{cover2}
\end{figure}\newpage
\begin{rk} Theorem \ref{th2} is also true   when $L$ is abelian, therefore, free abelian. \end{rk} 
\begin{proof} We take a basis  $a_1,\ldots a_n$   of $L$ such that $H$ has a basis $a_1^{k_1},\ldots , a_r^{k_r}$. Let $$g=a_1^{m_1}\ldots  a_n^{m_n}.$$ 
If $m_i$ is not divisible by $k_i$ for some $i=1,\ldots ,r,$ then we take $K$ generated by $a_1^{k_1},\ldots , a_r^{k_r}, a_{r+1},\ldots ,a_n$.
If each $m_i$ is divisible by $k_i$ for $i=1,\ldots,r,$ then some of $m_{r+1},\ldots ,m_n$ is non-zero, because $g\not\in H.$ Suppose $m_n\neq 0.$ Then take $K$ generated by $a_1^{k_1},\ldots , a_r^{k_r}, a_{r+1},\ldots ,a_n^{m_n+1}.$
    
\end{proof}
\begin{rk}\label{rk2}
In the case when $H$ is abelian and  $L$ is non-abelian  a finite-index subgroup of $L$ cannot be fully residually $H$. In this case there exists $x\in L$ such that $g \notin H_1=\langle H,x\rangle =H*\langle x\rangle$. 
\end{rk}
\begin{proof} Take some $x\in L$ such that $[h,x]\neq 1$ for $h\in H$. Then for any $h\in H$, elements $h,x$ generate a free subgroup. Therefore
$H_1=\langle H,x\rangle =H*\langle x\rangle$. If $g\not\in H_1$, then we found $x$. If $g\in H_1$, then $g$    can be uniquely written as $$g=h_1x^{k_1}\ldots h_rx^{k_r},$$ where $h_1,\ldots, h_r$ are elements in $H$, all, except maybe $h_1$ non-trivial. Let $k$ be a positive number that is larger than all $|k_1|,\ldots ,|k_r|.$ Then $g\not\in H_2=\langle H,x^k\rangle$ and we can take $x^k$ instead of $x$.
\end{proof}

\section{Proof of Theorem \ref{th1}}
 \begin{df}\cite{L}
   Let $G$ be a finitely generated group and $H$ a finitely generated  subgroup of $G.$ For a complex affine algebraic group $\bf{G}$ and any representation $\rho_0\in Hom(G,\bf{G}),$ we have the closed affine subvariety 
    $$R_{\rho_0,H}(G,{\bf{G}})=\set{\rho\in Hom(G,{\bf G}): \rho_0(h)=\rho(h)\text{ for all } h\in H}$$
    The representation $\rho_0$ is said to \textit{strongly distinguish} $H$ in $G$ if there exist representations $\rho,\rho'\in R_{\rho_0,H}(G,\bf{G})$ such that $\rho(g)\neq\rho'(g)$ for all $g\in G-H.$
\end{df}

\noindent If $L$ is a closed surface group or a free group, then Theorem \ref{th1} follows from \cite[Theorem1.1]{L}.  Suppose $L$ is not a surface group and not an abelian group. Let $\bf G$ be a complex affine algebraic group. 
By the following  lemma, it is  sufficient to construct a faithful representation $\rho\in Hom(L,{\bf G})$ that strongly distinguishes $H$ in $L$. 

\begin{lm}\label{3.1}\cite[Lemma 3.1]{L} Let $G$ be a finitely generated group, ${\bf G}$ a complex algebraic group, and $H$ a finitely generated subgroup of $G.$ If $H$ is strong distinguished  by a representation $\rho\in Hom(G, {\bf G})$, then there exists a representation $\varrho: G\longrightarrow {\bf G}\times {\bf G}$ such that $\varrho(G)\cap\overline{\varrho(H)}=\varrho(H),$ where $\overline{\varrho(H)}$ is the Zariski closure of $\varrho(H)$ in ${\bf G}\times {\bf G}.$
\end{lm}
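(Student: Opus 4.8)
The plan is to take for $\varrho$ the ``diagonal'' representation assembled from the two representations supplied by the strong distinguishing hypothesis. By definition of strong distinguishing, there are $\rho_1,\rho_2\in R_{\rho,H}(G,{\bf G})$ with $\rho_1(g)\neq\rho_2(g)$ for every $g\in G-H$; note that $\rho_1(h)=\rho(h)=\rho_2(h)$ for all $h\in H$. I would then define $\varrho:=(\rho_1,\rho_2):G\to{\bf G}\times{\bf G}$ by $\varrho(g)=(\rho_1(g),\rho_2(g))$.

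First I would observe that $\varrho(H)$ lies in the diagonal subgroup $\Delta=\set{(x,x):x\in{\bf G}}$, since $\rho_1$ and $\rho_2$ agree on $H$. Because ${\bf G}$ is an algebraic group, $\Delta$ is Zariski closed in ${\bf G}\times{\bf G}$ (it is the preimage of the identity under the morphism $(x,y)\mapsto xy^{-1}$), so $\overline{\varrho(H)}\subseteq\Delta$. For the nontrivial inclusion $\varrho(G)\cap\overline{\varrho(H)}\subseteq\varrho(H)$, suppose $g\in G$ satisfies $\varrho(g)\in\overline{\varrho(H)}$. Then $\varrho(g)\in\Delta$, i.e. $\rho_1(g)=\rho_2(g)$, and the contrapositive of the strong distinguishing property forces $g\in H$, whence $\varrho(g)\in\varrho(H)$. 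The reverse inclusion is immediate from $\varrho(H)\subseteq\varrho(G)$ and $\varrho(H)\subseteq\overline{\varrho(H)}$, so altogether $\varrho(G)\cap\overline{\varrho(H)}=\varrho(H)$.

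There is no real obstacle in this argument; the only point requiring care is the Zariski-closedness of the diagonal (which uses only that ${\bf G}$ is an algebraic group, hence separated), as this is exactly what lets membership of $\varrho(g)$ in $\overline{\varrho(H)}$ be detected by the single equation $\rho_1(g)=\rho_2(g)$. I would also record that faithfulness is inherited: if $\rho$ is faithful and $\varrho(g)=1$, then $\rho_1(g)=\rho_2(g)=1$, so $g\in H$ by the distinguishing property, and then $\rho_1(g)=\rho(g)=1$ forces $g=1$. This is the form in which the lemma is applied to deduce Theorem \ref{th1}, since there $\rho$ is constructed to be faithful.
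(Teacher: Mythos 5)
Your proof is correct, and it is exactly the standard argument: the paper does not prove this lemma itself but cites \cite[Lemma 3.1]{L}, whose proof is precisely your diagonal construction $\varrho=(\rho_1,\rho_2)$ together with the Zariski-closedness of the diagonal in ${\bf G}\times{\bf G}$. Your added observation about faithfulness being inherited is also correct and is indeed the form needed when the lemma is applied toward Theorem \ref{th1}.
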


\noindent

\begin{prop}  Let $L$ be a limit group and $ H$ a non-abelian  finitely generated subgroup. There exist a finite-index subgroup $K\leq L$ and a faithful representation $\rho _{\omega}:K\rightarrow \bf G$ that  strongly distinguishes $H$ in $K$.
 \end{prop}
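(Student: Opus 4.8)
The plan is to combine Theorem \ref{th3} (via its consequence that limit groups admit a local GICE structure, which is what Theorem \ref{th3} really provides) with the explicit representation-theoretic inputs of \cite{L}. First I would invoke Theorem \ref{th3} / Corollary \ref{th6}: since $L$ is a limit group and $H$ a finitely generated non-abelian subgroup, there is a finite-index subgroup $K \leq L$ with $H \leq K$ and $K$ a subgroup of an $H$-GICE group. For the purposes of this proposition we may even drop the element $g$ and just ask for such a $K$; in fact the cleanest statement is that $K$ is (a subgroup of) an $H$-GICE group, so we have a chain $H = K_0 < K_1 < \cdots < K_n = K$ in which each $K_{i+1}$ is either $K_i * F_i$ for a free group $F_i$ or an extension of a centralizer $(K_i, u_i)$.

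Next I would build the faithful representation $\rho_\omega$ that strongly distinguishes $H$ in $K$ by induction along this chain, using the corresponding operations on the representation variety side. The base case is $K_0 = H$: the identity-type situation, or rather we need a starting faithful representation of $H$ into some $\bf G$ — here one uses that $H$ itself, being a finitely generated subgroup of a limit group, has a faithful representation, and $H$ is trivially strongly distinguished in $H$ (take $\rho = \rho'$). For the inductive step I would handle the two constructions separately. For a free product $K_{i+1} = K_i * F_i$: given a faithful representation of $K_i$ strongly distinguishing $H$, extend it across the free factor $F_i$ using that $\bf G$ (taken, say, to be $\mathrm{SL}_2(\C)$ or a product thereof) contains free subgroups with lots of room, and check that one can choose the extension, and a deformation of it supported on $F_i$, so that strong distinguishing is preserved — elements of $K_{i+1} - H$ either already lie in $K_i - H$ (handled by induction) or involve $F_i$ syllables nontrivially and can be separated by a generic choice on $F_i$. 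For a centralizer extension $K_{i+1} = (K_i, u_i)$: here one uses the standard fact (Bass–Serre / the structure of extensions of centralizers) that a representation of $K_i$ sending $u_i$ to an element with a prescribed centralizer extends to $K_{i+1}$ by sending the new generator $t$ into that centralizer, and one deforms within the centralizer (which contains a copy of $\C^*$ or $\C$, giving a positive-dimensional family) to achieve strong distinguishing of the new elements; again elements of $K_{i+1}-H$ not already in $K_i$ have normal form involving $t$ and are separated by a generic choice of the image of $t$. Throughout, one must be careful that the representations stay inside the fixed ambient group $\bf G$ — this is why \cite{L} works with a complex affine algebraic group and why we are free to pass to $\bf G \times \bf G$, powers, etc., as in Lemma \ref{3.1}; taking $\bf G$ to be a large enough $\mathrm{SL}_N(\C)$ (or $\mathrm{GL}(V)$) accommodates all the finitely many extensions in the chain simultaneously.

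The main obstacle I anticipate is the centralizer-extension step, specifically arranging that the extension of the representation to $(K_i,u_i)$ is still \emph{faithful} and still \emph{strongly distinguishes} $H$, rather than just $K_i$: one needs the image of $u_i$ to have a centralizer in $\bf G$ large enough to contain a faithful image of the new rank-one (or rank-$k$) abelian subgroup, yet the deformation one performs to separate the new elements must not collapse any relation or identify elements of $K_{i+1}-H$. This is essentially the representation-variety incarnation of the fact that an extension of a centralizer of a (fully residually $H$) group is again fully residually $H$, and I would expect to prove it by showing that the set of representations in $R_{\rho_0,H}(K_{i+1},{\bf G})$ that fail to separate a given $g \in K_{i+1}-H$, or fail to be faithful on a given element, is a proper closed subvariety, and then using irreducibility of the relevant variety (coming from the torus/affine-space factor introduced by the centralizer extension) together with a Baire-category / countable-union argument over the countably many elements $g$ and relations — exactly the mechanism by which \cite{L} handles free and surface groups. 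Once the proposition is established, Lemma \ref{3.1} upgrades the strongly distinguishing representation of $K$ to one with $\varrho(K)\cap\overline{\varrho(H)}=\varrho(H)$, and then inducing this representation up from the finite-index subgroup $K$ to $L$ (and intersecting with the regular representation of $L/K_{\mathrm{core}}$ to restore faithfulness and separate $K$ from $L$) yields the representation $\rho_H$ of Theorem \ref{th1}.
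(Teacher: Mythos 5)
Your proposal takes a genuinely different route from the paper, and as written it has a real gap exactly at the step you yourself flag as the main obstacle. The paper does not induct along the GICE chain at all: it invokes Theorem \ref{th2} to obtain a finite-index subgroup $K\geq H$ that is \emph{fully residually} $H$, fixes a single faithful representation $\rho$ of $H$ in ${\bf G}$, composes $\rho$ with members of the discriminating family of retractions $K\to H$ to produce, for each ball $B_t$ in the Cayley graph of $K$, a pair $\rho_t,\rho'_t\in Hom(K,{\bf G})$ agreeing on $H$, injective on $B_t$, and separating every element of $B_t\cap(K-H)$, and then passes to ultralimits over a non-principal ultrafilter as in the proof of Lemma 3.2 of \cite{L} to get one faithful pair $\rho_\omega,\rho'_\omega$ that strongly distinguishes $H$. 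All the hard content is thus absorbed into Theorem \ref{th2}; no representation-variety deformations, irreducibility statements, or genericity arguments are needed. Your plan never actually uses the fully-residually-$H$ property, which is the paper's key input.

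In your inductive scheme the decisive lemmas are missing. At a centralizer-extension step $K_{i+1}=(K_i,u_i)$ the Baire-category argument needs, for every nontrivial $w\in K_{i+1}$ and every $g\in K_{i+1}-H$, a \emph{witness}: an extension of the given faithful, strongly-distinguishing pair on $K_i$ under which $w$ survives and $g$ is separated. Producing such witnesses is precisely the content of the statement that a centralizer extension is discriminated by its base (a big-powers/genericity argument), which you do not supply; moreover you must arrange inductively that $\rho(u_i)$ has a connected, positive-dimensional centralizer in ${\bf G}$ large enough to contain a faithful image of $C_{K_i}(u_i)\times\Z^k$, which constrains all earlier choices and is not addressed. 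In effect you would be re-proving, on the representation variety, the fully-residually-$H$ statement the paper already has and uses directly. Two smaller points: for a general limit group, Corollary \ref{th6} only makes $K$ a \emph{subgroup} of an $H$-GICE group, so your chain terminates at a larger group $G'$ and you must restrict the resulting pair from $G'$ to $K$ (harmless, but your induction as stated assumes $K_n=K$, which is only guaranteed when $L$ is an ICE group as in Theorem \ref{th3}); and your final paragraph about Lemma \ref{3.1} and induction up to $L$ belongs to the proof of Theorem \ref{th1}, not to this proposition.
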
 
 \begin{proof} 
 
By Theorem \ref{th2}, there exists a finite-index 
subgroup $K$ of $L$ such that $K$ is fully residually $H$. Let $\rho$ be a faithful representation of $H$ in $\bf G$. We order balls $B_t$ of radius $t$ in the Cayley graph of $K$ and finite sets $S_t=B_t\cap (K-H)$. Since we have a discriminating family of $H$-homomorphisms from $K$ to $H,$ we can construct for any $t\in\mathbb N$ representations $\rho _t$  and $\rho '_t$ in $Hom (K, \bf G)$ that coincide on $H$, distinguish all elements in  $S_t,$ and map $B_t$ monomorphically. Selecting a non-principal ultrafilter $\omega\in \bf N$, we have two associated ultraproduct representations $\rho _{\omega}, {\rho'}_{\omega}:K\rightarrow \bf G$ (see \cite[ Proof of Lemma 3.2]{L}).  These representations are faithful because each $B_t$ is mapped monomorphically on a co-finite set of $j\in\mathbb N$ and for any $g\in K-H$, $\rho_{\omega}(g)\neq {\rho'}_{\omega}(g).$
 \end{proof}

\noindent Let us prove the first statement of Theorem \ref{th1}.  The proof of \cite[Theorem 1.1]{L}
shows that it is sufficient to have a representation of $K$ that strongly distinguishes $H$. Indeed, like in \cite[Corollary 3.3]{L}, we can construct a representation
$\Phi : K\rightarrow GL(2,{\mathbb C})\times GL(2,{\mathbb C})$ such that $\Phi (g)\in Diag(GL(2,{\mathbb C}))$ if and only if $g\in H$. Setting $d_H=[G:K]$, we have the induced representation $${Ind_K}^G(\Phi ):G\rightarrow 
GL(2d_H,{\mathbb C})\times GL(2d_H,{\mathbb C}).$$  
Recall, that when $\Phi$ is represented by the action on the vector space $V$ and $G=\cup _{i=0} ^t g_iK$, then the induced representation 
acts on the disjoint union $\sqcup_{i=0} ^t g_iV$ as follows
$$g\Sigma g_iv_i=\Sigma g_{j(i)} \Phi (k_i)v_i,$$ where
$gg_i=g_{j(i)}k_i, $ for $k _i\in K.$ Taking $\rho ={Ind_K}^G(\Phi )$, it follows from the construction of $\rho $ and definition of induction that $\rho (g)\in\overline {( \rho (H))}$ if and only if $g\in H$. If we set $\rho=\rho_H,$  then  Theorem \ref{th1} is proved. \\

\section{ Proof of Corollary \ref{co1}}

Given a complex algebraic group $\textbf{G} < GL(n,\C)$, there exist polynomials  $P_1,\ldots,P_r\in\C\bkt{X_{i,j}}$ such that
 $$\textbf{G}=\textbf{G}\prn{\C}= V\prn{P_1,\ldots,P_r}=\set{X\in\C^{n^2}\mid P_k(X)=0, k=1,\ldots, r}$$
We refer to the polynomials $P_1,\ldots,P_r$ as \textit{defining polynomials}  for $\textbf{G}$. We will say that \textbf{G} is $K$–defined for a subfield $K\subset\C$ if there exists defining polynomials $P_1,\ldots,P_r\in K\bkt{X_{i,j}}$ for $\textbf{G}$.
For a complex affine algebraic subgroup $\textbf{H} < \textbf{G} < GL(n,\C)$, we will pick the defining polynomials for $\textbf{H}$ to contain a defining set for $\textbf{G}$ as a subset. Specifically, we have polynomials
$P_1,...,P_{r_{\textbf{G}}},P_{r_{\textbf{G}}+1},...,P_{r_{\textbf{H}}}$  such that 
 \begin{equation}\label{eq:1}
     \textbf{G}= V\prn{P_1,\ldots,P_{r_{\textbf{G}}}} \text{ and } \textbf{H}= V\prn{P_1,\ldots,P_{r_{\textbf{H}}}}
 \end{equation}

\noindent If $\textbf{G}$ is defined over a number field $K$ with associated ring of integers $\mathcal{O}_K$, we can find polynomials $P_1,\ldots,P_r\in\mathcal{O}_K\bkt{X_{i,j}}$  as a defining set by clearing denominators. For instance, in the case when $K = \Q$ and $\mathcal{O}_K= \Z$, these are multivariable integer polynomials.\vspace{2mm}

\noindent For a fixed finite set $X=\set{x_1,\ldots, x_t}$ with associated free group $F(X)$ and any group $G,$ the set of homomorphisms from $F(X)$ to $G,$ denoted by $Hom\prn{F\prn{X},G},$ can be identified with $G^t=G_1\times\ldots\times G_t.$ For any point $\prn{g_1,\ldots,g_t}\in G^t,$ we have an associated homomorphism $\varphi_{\prn{g_1,\ldots,g_t}}:F\prn{X}\longrightarrow G$ given by $\varphi_{\prn{g_1,\ldots,g_t}}\prn{x_i}=g_i.$ For any word $w\in F(X),$ we have a function Eval$_w: Hom(F(X), G)\longrightarrow G $ defined by  Eval$_w(\varphi_{\prn{g_1,\ldots,g_t}})(w)=w(g_1,\ldots,g_t).$ For a finitely presented group $\Gamma,$ we fix a finite presentation $\gen{\gamma_1,\ldots, \gamma_{t}\mid r_1,\ldots, r_{t'} },$ where $X=\set{\gamma_1,\ldots, \gamma_{t}}$ generates $\Gamma$ as a monoid and $\set{r_1,\ldots, r_{t'}}$ is a finite set of relations. If $\textbf{G}$ is a complex affine algebraic subgroup of $Gl_n(n,\C),$ the set $Hom(\Gamma,\textbf{G})$ of homomorphisms $\rho: \Gamma\longrightarrow \textbf{G}$ can be identified with an affine subvariety of $G^t.$ Specifically,
\begin{equation}
    Hom(\Gamma,\textbf{G})=\set{\prn{g_1,\ldots, g_{t}}\in \textbf{G}^t\mid r_j\prn{g_1,\ldots, g_{t}}=I_n \text{ for all } j}
\end{equation}

\noindent If $\Gamma$ is finitely generated, $Hom(\Gamma,\textbf{G})$ is an affine algebraic variety by the Hilbert Basis Theorem.

\vspace{2mm}\noindent The set $Hom(\Gamma,\textbf{G})$ also has a topology induced by the analytic topology on $G^t.$ There is a Zariski open
subset of $Hom(\Gamma,\textbf{G})$ that is smooth in the this topology called the smooth locus, and the functions
Eval$_w: Hom(\Gamma, \textbf{G})\longrightarrow \textbf{G}$ are analytic on the smooth locus. For any subset $S\in G$ and representation $\rho\in Hom(\Gamma, \textbf{G})$, $\overline{\rho(S)}$ will denote the Zariski closure of $\rho(S)$ in $\textbf{G}$.

 \begin{lm}\label{lm11} (\cite[Lemma 5.1]{L})
    Let ${\bf G}\leq GL\prn{n,\C}$ be a $\overline{\Q}$-algebraic group, $L\leq {\bf G}$ be a finitely generated subgroup, and $\textbf{A}\leq {\bf G}$ be a  $\overline{\Q}$-algebraic subgroup. Then, $H=L\cap\textbf{A}$ is closed in the profinite topology. 

 \end{lm}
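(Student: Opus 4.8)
The plan is to realize $H = L \cap \mathbf{A}$ as a subgroup of the arithmetic-type group $L$ whose cosets can be detected by finitely many congruence conditions, using the fact that everything is defined over $\overline{\Q}$. First I would observe that since $L \leq \mathbf{G} \leq GL(n,\C)$ is finitely generated, there is a finitely generated subring $R \subset \overline{\Q}$ such that all matrix entries of a fixed finite generating set of $L$ — hence of all of $L$ — lie in $R$; thus $L \leq GL(n, R)$. Enlarging $R$ by inverting one more prime if necessary, we may assume $R$ is the ring of $S$-integers in a number field, so that for each maximal ideal $\mathfrak{p} \subset R$ the reduction map $GL(n,R) \to GL(n, R/\mathfrak{p})$ has finite image, and these congruence subgroups form a neighborhood basis of the identity for the profinite topology on $L$. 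The point is that $\mathbf{A}$, being $\overline{\Q}$-algebraic, is cut out by finitely many polynomials $Q_1, \dots, Q_s$ with coefficients in (an extension of) $R$.

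Next I would take an element $g \in L \setminus H$ and produce a finite-index subgroup of $L$ that contains $H$ but not $g$. Since $g \notin \mathbf{A}$, some defining polynomial $Q_\ell$ of $\mathbf{A}$ does not vanish at $g$: the value $Q_\ell(g)$ is a nonzero element of $R$ (after clearing denominators into $R$). Choose a maximal ideal $\mathfrak{p}$ of $R$ with $Q_\ell(g) \notin \mathfrak{p}$. Then reduction modulo $\mathfrak p$ sends $g$ to a point off the mod-$\mathfrak p$ reduction of $\mathbf{A}$, while it sends every element of $H \subseteq \mathbf{A}(R)$ into the reduction of $\mathbf{A}$. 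Consequently, if $\pi_{\mathfrak p}: L \to GL(n, R/\mathfrak p)$ is the reduction homomorphism and $U = \pi_{\mathfrak p}^{-1}\bigl(\mathbf{A}(R/\mathfrak p)\bigr)$ — which is a union of cosets of the finite-index congruence kernel $\ker \pi_{\mathfrak p}$, hence itself of finite index if it is a subgroup, but in any case a clopen set in the profinite topology — we have $H \subseteq U$ and $g \notin U$. To get an honest finite-index subgroup I would instead argue directly: the set $\ker \pi_{\mathfrak p} \cdot H$ is a finite-index subgroup of $L$ containing $H$; I claim it misses $g$. Indeed if $g = k h$ with $k \in \ker \pi_{\mathfrak p}$, $h \in H$, then $\pi_{\mathfrak p}(g) = \pi_{\mathfrak p}(h) \in \mathbf{A}(R/\mathfrak p)$, contradicting $Q_\ell(g) \notin \mathfrak p$. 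Since $g$ was an arbitrary element outside $H$, this shows $H$ is the intersection of finite-index subgroups of $L$ — that is, $H$ is closed in the profinite topology on $L$.

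The main obstacle I anticipate is the bookkeeping around the fields and rings of definition: $\mathbf{A}$ is only assumed $\overline{\Q}$-algebraic, so its defining polynomials may have coefficients in some number field $k$ not contained in the field generated by the entries of $L$; one must pass to a common number field $k'$ containing both, take $R' = \mathcal{O}_{k'}[1/d]$ for a suitable $d$, and check that reduction modulo almost all primes of $R'$ behaves well — in particular that $Q_\ell(g)$, a priori in $k'$, can be scaled to a nonzero element of $R'$ and that $R'/\mathfrak p$ is finite. This is standard but needs to be stated carefully. A secondary point is verifying that the congruence subgroups $\ker \pi_{\mathfrak p}$ genuinely have finite index in $L$: this follows because $\pi_{\mathfrak p}(L) \subseteq GL(n, R'/\mathfrak p)$ and the latter is a finite group. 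With these points in hand the argument is complete; note it does not use anything about limit groups, only that $L$ is a finitely generated linear group over $\overline{\Q}$ and $\mathbf{A}$ is $\overline{\Q}$-defined.
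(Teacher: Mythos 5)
There is a genuine gap at the very first step: you assume that the matrix entries of $L$ lie in $\overline{\Q}$ (``there is a finitely generated subring $R\subset\overline{\Q}$ such that all matrix entries \dots lie in $R$''), and from there you pass to $S$-integers in a number field. But the hypothesis of the lemma is only that the algebraic groups $\mathbf{G}$ and $\mathbf{A}$ are $\overline{\Q}$-defined, i.e.\ that their \emph{defining polynomials} have algebraic coefficients; the finitely generated subgroup $L\leq\mathbf{G}\leq GL(n,\C)$ is an arbitrary subgroup of the complex points and may have transcendental entries. This is not a pathological case one can wave away: in the application (Corollary \ref{co1}) the representation $\rho_H$ of the limit group is produced by an ultraproduct construction, and only the Zariski closures $\mathbf{G}=\overline{\rho_H(L)}$ and $\mathbf{A}=\overline{\rho_H(H)}$ are arranged to be $\overline{\Q}$-defined, not the image of $L$ itself. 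Accordingly, the paper's proof takes the field $K_L$ generated by the entries, writes it as $K(T_1,\dots,T_d)$ with $K$ a number field and $T$ a transcendence basis, works in the finitely generated ring $R_L$ (a localization of $\mathcal{O}_{K_0}[T_1,\dots,T_d]$), and invokes \cite[Lemma 2.1]{KM} to produce a homomorphism onto a finite ring $R$ with $\psi_R(Q_g)\neq 0$. Your reduction-mod-$\mathfrak{p}$ step for rings of $S$-integers is exactly the special case $d=0$ of this.

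The rest of your argument (choose a defining polynomial $Q_\ell$ of $\mathbf{A}$ not vanishing at $g$, find a finite quotient ring in which its value survives, and observe that $\ker\pi\cdot H$ is a finite-index subgroup containing $H$ and missing $g$) is the same mechanism as the paper's and is fine. To repair the proposal you would either cite the same lemma of Bou-Rabee--McReynolds, or argue directly that the finitely generated domain $R_L$ has a maximal ideal $\mathfrak{m}$ avoiding $Q_g$ with $R_L/\mathfrak{m}$ finite (the general form of the Malcev argument, via Noether normalization / the general Nullstellensatz), rather than relying on residue fields of number rings. As written, the claim ``$L$ is a finitely generated linear group over $\overline{\Q}$'' in your closing sentence is precisely the unjustified assumption.
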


\begin{proof}
    Given $g\in L-H$, we need a homomorphism $\varphi:L\longrightarrow Q$ such that $|Q|<\infty$ and $\varphi\prn{g}\notin\varphi\prn{H}.$
We first select polynomials $P_1,...,P_{r_{\textbf{G}}},...,P_{r_{\textbf{A}}}\in\C\bkt{X_{i,j}} $ satisfying (\ref{eq:1}). Since $\textbf{G}$ and $\textbf{A}$ are $\overline{\Q}$-defined, we can select $P_j\in\mathcal{O}_{K_0}\bkt{X_{i,j}}$ for some number field $K_0/\Q.$ We fix a finite set $\set{l_1,\ldots, l_{r_L}}$ that generates $L$  as a monoid. In order to distinguish between elements of $L$ as an abstract group and the explicit elements in ${\bf G},$ we set $l=M_l\in {\bf G}$ for each $l\in L.$  In particular, we have a representation given by $\rho_0:L\longrightarrow{\bf G}$ given by $\rho_0(l_{t})=M_{l_{t}}$. We set $K_L$ to be the field generated over $K_0$ by the set of matrix entries $\set{\prn{M_t}_{i,j}}_{t,i,j}$. It is straightforward to see that $K_L$  is independent of the choice of the generating set for $L.$  Since $L$ is finitely generated, the field $K_L$ has finite transcendence degree over $\Q$ and so $K_L$ is isomorphic to a field of the form $K(T)$  where $K/\Q$ is a number field and  $T=\set{T_1,\ldots, T_d}$ is a transcendental basis (See \cite{L}). For each, $M_{l_t},$ we have $(M_{l_t})_{i,j}=F_{i,j,t}(T)\in K_L$. In particular, we can view the $(i,j)$–entry of the matrix $M_{l_t}$ as a rational function in $d$ variables with coefficients in some number field $K$. Taking the ring
generated over $\mathcal{O}_{K_0}$ by the set $\set{\prn{M_{l_t}}_{i,j}}_{t,i,j}$, $R_L$ is obtained from $\mathcal{O}_{K_0}\bkt{T_1,\ldots, T_d}$ by inverting
a finite number of integers and polynomials. Any ring homomorphism  $R_L\longrightarrow R$ induces a group homomorphism $GL(n,R_L)\longrightarrow GL(n,R)$, and since $L\leq GL(n,R_L)$, we obtain $L\longrightarrow GL(n,R)$ . If $g\in L- H$ then there exists $r_{\mathbf{G}}<j_g\leq r_{\mathbf{A}} $ such that $Q_g=P_{j_{g}}\prn{\prn{M_l}_{1,1},\ldots,\prn{M_l}_{n,n}}\neq 0$. Using Lemma 2.1 in \cite{KM}, we have a ring homomorphism $\psi_R:R_L\longrightarrow R$ with $|R|<\infty$ such that $\psi_R(Q_g)\neq 0$. Setting, $\rho_{R}:GL(n, R_L)\longrightarrow GL(n,R)$ we assert that $\rho_{R}(g)\notin\rho_{R}(H)$. To see this, set $\overline{   M}_{\eta}=\rho_R(\eta)$ for each $\eta\in L$, and note that $\psi_R(P_j((M_{\eta})_{1,1},\ldots,M_{\eta})_{n,n}) )=P_j((\overline{M}_{\eta})_{1,1},\ldots,(\overline{M}_{\eta})_{n,n} )$ . For each $h\in H$, we know that $P_{j_{l}}\prn{(M_h)_{i,j}}=0$ and so $P_j((\overline{M}_{\eta})_{1,1},\ldots,(\overline{M}_{\eta})_{n,n})= 0$ . However, by selection of $\psi_R$, we know that $\psi_R(Q_g)\neq 0$ and so $\rho_{R}(g)\notin\rho_{R}(H)$ . 
\end{proof}

 \noindent Theorem \ref{th1}  and Lemma \ref{lm11} imply Corollary \ref{co1}. 

\begin{proof} 
    Since $H\leq L$ is finitely generated, by Theorem \ref{th1}, there is a faithful representation $$\rho_H:L\longrightarrow GL\prn{n,\C}$$ such that $\overline {\rho _H(H)} \cap\rho_H(L)=\rho _H(H)$. We can construct the representation in Theorem \ref{th1} so that $\textbf{G}=\overline{\rho_H(L)}$ and $\textbf{A}=\overline {\rho _H(H)}$ are both $\overline{\Q}$-defined. So, by Lemma \ref{lm11}, we can separate $H$ in $L.$ Next, we quantify the separability of $H$ in $L.$ Toward that end, we need to bound the order of the ring $R$ in the proof of Lemma \ref{lm11} in terms of the word length of the element $g.$ Lemma 2.1 from \cite{KM} bounds the size of $R$ in terms of the coefficient size and degree of the polynomial $Q_g.$ It follows from a discussion on pp 412-413 of \cite{KM} that the coefficients and degree can be bounded in terms of the word length of $g,$ and that the coefficients and degrees of the polynomials $P_j.$ Because the $P_j$ are independent of the word $g,$ there exists a constant $N_0$ such that $|R|\leq \norm{g}^{N_0}.$ By construction, the group $Q$ we seek is a subgroup of $GL(n,R).$ Thus, $|Q|\leq |R|^{n^2}\leq\norm{g}^{N_0n^2}.$ Taking $N=N_0n^2$ completes the proof.



\end{proof}
\section{The Hanna Neumann conjecture for hyperbolic limit groups}\label{HN}
Y. Antolin and A. Jaikin-Zapirain proved in \cite{AJ} the geometric Hanna Neumann conjecture for surface groups and formulated 
the Geometric Hanna Neumann conjecture for limit groups \cite[Conjecture 1]{AJ}as follows. Let G be a limit group. Then for
every two finitely generated subgroups $U$ and $W$ of $G$
$$ \Sigma _{x\in U\backslash G/W}  \bar\chi (U\cap xWx^{-1})\leq \bar \chi (U)\bar\chi (W)$$

Here for a virtually FL-group $\Gamma$ we define its Euler characteristic as
$$\chi (\Gamma )=\frac{1}{[\Gamma :\Gamma _0]}\Sigma _{i=0}^{\infty} (-1)^i dim_{\mathbb Q}H_i(\Gamma _0,\mathbb Q), $$ where $\Gamma _0$ is an FL-subgroup of $\Gamma$ of finite index. And $\bar\chi (\Gamma)=\max\{0,-\chi (\Gamma )\}.$ Observe that for a non-trivial finitely generated free group $\Gamma$, $\bar\chi (\Gamma )=d(\Gamma )-1$, where $d(\Gamma )$ is the number of generators, for a surface group $\Gamma$ we have 
$\bar\chi (\Gamma)=d(\Gamma )-2$. By a surface group we mean the fundamental group of a compact closed surface of negative
Euler characteristic. Notice that by \cite{AJ} limit groups are FL-groups. Notice also that for hyperbolic limit groups $dim_{\mathbb Q}H_i(\Gamma _0,\mathbb Q)=0$ for $i>2$.

In this section we will prove the conjecture for hyperbolic limit groups.

The notion of $L^2$-independence was introduced in \cite{AJ}.  The group $G$ is $L^2$-Hall, if for every finitely generated subgroup $H$ of $G$, there
exists a subgroup $K$ of $G $ of finite index containing $H$ such that $H$ is $L^2$-independent in $K$. Let $G$ be a hyperbolic limit group. By \cite[Theorem 1.3]{AJ}, if   $G$ satisfies the $L^2$-Hall property,
then the geometric Hanna Neumann conjecture holds for $G.$

As explained in \cite[Lemma
4.1]{AJ} and the comment after the lemma, since the limit groups satisfy the strong
Atiyah conjecture, if $G$ is a limit group and $H\leq K$ subgroups in $G$, then $H$ is
$L^2$-independent in $K$ if the correstriction map
$$cor : H_1(H; \mathcal D_{\mathbb Q[G]})\rightarrow  H_1(K; \mathcal D_{\mathbb Q[G]})$$ is injective. Here
$\mathcal D_{\mathbb Q[G]}$ denote the Linnell division ring.
\begin{lm} \label{lmL} Let $G$ be a limit group and $H\leq K$ subgroups of $G$. Assume that there
an abelian subgroup $B$ of $G$ such that $K = \langle H,B\rangle =H*_{A}B, $ 
where $A = H\cap B$.
Then the correstriction map $cor : H_1(H; \mathcal D_{\mathbb Q[G]})\rightarrow  H_1(K; \mathcal D_{\mathbb Q[G]})$ is injective.

\end{lm}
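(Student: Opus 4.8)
The plan is to use the Mayer--Vietoris sequence in $\mathcal D_{\mathbb Q[G]}$-homology associated with the amalgamated free product decomposition $K = H *_A B$. Since $G$ is a limit group, it satisfies the strong Atiyah conjecture, so $\mathcal D_{\mathbb Q[G]}$ is a division ring and all the homology groups $H_i(-;\mathcal D_{\mathbb Q[G]})$ are vector spaces (right $\mathcal D_{\mathbb Q[G]}$-modules) over it; in particular every short exact sequence of such modules splits and "injective" means exactly "has a left inverse", which is what we want for the correstriction map. The decomposition $K = H *_A B$ gives a graph-of-groups (indeed a tree) on which $K$ acts, and hence a Mayer--Vietoris long exact sequence
$$\cdots \to H_1(A;\mathcal D_{\mathbb Q[G]}) \to H_1(H;\mathcal D_{\mathbb Q[G]})\oplus H_1(B;\mathcal D_{\mathbb Q[G]}) \xrightarrow{\ cor_H - cor_B\ } H_1(K;\mathcal D_{\mathbb Q[G]}) \to H_0(A;\mathcal D_{\mathbb Q[G]}) \to \cdots$$
where the first map is $(cor_A^H, -cor_A^B)$. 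So injectivity of $cor: H_1(H)\to H_1(K)$ will follow once I understand the map $H_1(A;\mathcal D_{\mathbb Q[G]}) \to H_1(H;\mathcal D_{\mathbb Q[G]})\oplus H_1(B;\mathcal D_{\mathbb Q[G]})$ well enough to rule out classes from $H_1(H)$ dying in $H_1(K)$.

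First I would compute the $\mathcal D_{\mathbb Q[G]}$-homology of abelian subgroups. For a finitely generated abelian subgroup $C\le G$ of rank $\ge 1$, since $C$ is infinite one has $H_0(C;\mathcal D_{\mathbb Q[G]}) = 0$ (the augmentation-type coinvariants vanish because $\mathcal D_{\mathbb Q[G]}$ is a division ring and $C$ is infinite: $\mathcal D_{\mathbb Q[G]}\otimes_{\mathbb Q[C]}\mathbb Q = 0$ as $c-1$ is invertible in $\mathcal D_{\mathbb Q[G]}$ for $1\neq c\in C$). More generally $H_i(C;\mathcal D_{\mathbb Q[G]}) = 0$ for all $i\ge 0$ when $C$ is infinite, by the same invertibility fed through the Koszul resolution of $\mathbb Z$ over $\mathbb Z[C]$ — each differential involves some $c_j - 1$ which becomes invertible, so the whole complex is acyclic after tensoring. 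The key subgroup is $A = H\cap B$. If $A$ is infinite then $H_i(A;\mathcal D_{\mathbb Q[G]}) = 0$ for all $i$, and then the Mayer--Vietoris sequence immediately gives that $H_1(H)\oplus H_1(B)\to H_1(K)$ is an isomorphism; in particular $cor: H_1(H)\to H_1(K)$ is injective (it is a direct-summand inclusion). If $A$ is trivial, then $K = H * B$, and again $H_i(A;\mathcal D_{\mathbb Q[G]}) = H_i(1;\mathcal D_{\mathbb Q[G]})$, which is $\mathcal D_{\mathbb Q[G]}$ in degree $0$ and $0$ otherwise; the relevant piece of Mayer--Vietoris is
$$0 = H_1(A) \to H_1(H)\oplus H_1(B) \to H_1(K) \to H_0(A) \to H_0(H)\oplus H_0(B),$$
so $H_1(H)\oplus H_1(B)\to H_1(K)$ is again injective, hence so is $cor$ on the $H_1(H)$ summand.

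The main obstacle is handling the intermediate case where $A$ is nontrivial but not of full rank in $B$ — actually, by the above, as soon as $A$ is infinite we are done, so the only genuinely separate case is $A = 1$, which I already handled; thus there is really no obstacle beyond correctly establishing the vanishing $H_i(C;\mathcal D_{\mathbb Q[G]}) = 0$ for infinite abelian $C$ and assembling the Mayer--Vietoris sequence for the $K$-action on the Bass--Serre tree with $\mathcal D_{\mathbb Q[G]}$-coefficients. I should be careful that the coefficient module $\mathcal D_{\mathbb Q[G]}$ is a $G$-bimodule restricted to $K$, so the edge and vertex terms are genuinely $H_*(H;\mathcal D_{\mathbb Q[G]})$, $H_*(B;\mathcal D_{\mathbb Q[G]})$, $H_*(A;\mathcal D_{\mathbb Q[G]})$ with the ambient $G$-coefficients (not $K$-coefficients), and that flatness/the division-ring property is what makes the tree's equivariant chain complex yield the short exact sequence of chain complexes underlying Mayer--Vietoris; this is exactly the setup used in \cite{AJ}, so I will cite their framework for the existence of this sequence and only need to feed in the abelian vanishing computation.
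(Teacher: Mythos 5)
Your proposal is correct and takes essentially the same route as the paper: the paper invokes Chiswell's exact sequence for the amalgam $H *_A B$ (the Mayer--Vietoris sequence you describe) and kills $H_1(A;\mathcal D_{\mathbb Q[G]})$ by observing that $\mathcal D_{\mathbb Q[G]}$ is flat over $\mathbb Q[A]$, since the division closure of $\mathbb Q[A]$ in $\mathcal D_{\mathbb Q[G]}$ is the field of fractions of $\mathbb Q[A]$ --- which is just a repackaging of your Koszul/invertibility computation. Your additional vanishing of $H_0(A)$ and the case split between $A=1$ and $A$ infinite are unnecessary (the flatness argument treats abelian $A$, trivial or not, uniformly) but harmless.
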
 
\begin{proof} By \cite[Theorem2(2)]{Ch}, we obtain the exact sequence
$$H_1(A; \mathcal D_{\mathbb Q[G]})\rightarrow ^{(cor, -cor )}  H_1(H; \mathcal D_{\mathbb Q[G]})\oplus H_1(B; \mathcal D_{\mathbb Q[G]})\rightarrow^{(cor, cor )}  H_1(K; \mathcal D_{\mathbb Q[G]}).$$

Since $A$ is abelian, $H_1(A;D_{Q[G]})=0$. Indeed,  the division ring generated by $Q[A]$ inside $D_{Q[G]}$  is isomorphic to  the field of fractions $R$ of $Q[A]$, and so $D_{Q[G]}$ is also an $R$-vector space. Thus, $D_{Q[G]}$ is flat as a $Q[A]$-module. In particular, $H_1(A;D_{Q[G]})=0$.

So the correstriction map $$H_1(H; \mathcal D_{\mathbb Q[G]})\rightarrow  H_1(K; \mathcal D_{\mathbb Q[G]})$$ is injective.

\end{proof}
\begin{cy} A limit group is $L^2$-Hall.
\end{cy}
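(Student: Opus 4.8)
The plan is to combine the local GICE structure established in Theorem \ref{th3} (and its corollary for limit groups) with Lemma \ref{lmL}, proceeding by induction along the chain of extensions of centralizers and free products that realizes a finite-index subgroup $K$ as an $H$-GICE group. Fix a limit group $G$ and a finitely generated subgroup $H \leq G$. By Corollary \ref{th6}, there is a finite-index subgroup $K \leq G$ containing $H$ with $K$ a subgroup of an $H$-GICE group; in fact, tracking the proof of Theorem \ref{th3}, one can take $K$ itself to be an $H$-GICE group (when $H$ is non-abelian; the abelian case is handled separately, see below). So we have a chain $H = K_0 < K_1 < \cdots < K_n = K$ where each $K_{i+1}$ is either $K_i \ast F$ for a free group $F$, or an extension of a centralizer $(K_i, u)$ for some $u \in K_i$.

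The key point is that each link in this chain is an amalgamated free product of the required shape for Lemma \ref{lmL}. If $K_{i+1} = (K_i, u) = \langle K_i, t_1, \dots, t_k \mid [c, t_j]\ (c \in C_{K_i}(u)),\ [t_a,t_b] \rangle$, then setting $B = \langle C_{K_i}(u), t_1, \dots, t_k \rangle$ — which is abelian, being a finitely generated abelian group (centralizers in limit groups are abelian, and $B$ is the extended centralizer) — we have $K_{i+1} = K_i \ast_{A} B$ with $A = C_{K_i}(u) = K_i \cap B$. If instead $K_{i+1} = K_i \ast F$, this is $K_i \ast_{\{1\}} F$; here $F$ need not be abelian, but $H_1(\{1\}; \mathcal D_{\mathbb Q[G]}) = 0$ trivially and the argument of Lemma \ref{lmL} via the Mayer–Vietoris-type exact sequence of \cite{Ch} goes through verbatim to give injectivity of $cor: H_1(K_i; \mathcal D_{\mathbb Q[G]}) \to H_1(K_{i+1}; \mathcal D_{\mathbb Q[G]})$ (alternatively, absorb the free factor into the centralizer-extension case, or just note that the free-product case is the even easier $A = 1$ instance). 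Applying Lemma \ref{lmL} (suitably read) at each step and composing, $cor: H_1(H; \mathcal D_{\mathbb Q[G]}) \to H_1(K; \mathcal D_{\mathbb Q[G]})$ is injective, since a composition of injective maps is injective. By the discussion recalled before Lemma \ref{lmL} (limit groups satisfy the strong Atiyah conjecture, so injectivity of $cor$ implies $L^2$-independence), $H$ is $L^2$-independent in $K$, which is exactly the $L^2$-Hall property.

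Two loose ends must be addressed. First, the abelian case: if $H$ is abelian we cannot invoke Theorem \ref{th2}, but $H_1(H; \mathcal D_{\mathbb Q[G]}) = 0$ by the flatness argument already used inside Lemma \ref{lmL}'s proof, so $cor$ is trivially injective with $K = G$ (or any finite-index $K \supseteq H$), and $L^2$-independence is automatic. Second — and this is the step I expect to be the main obstacle — one must make sure that the group $K$ produced by Theorem \ref{th3} is an $H$-GICE group \emph{as a subgroup of $G$}, i.e.\ that the coefficient ring $\mathcal D_{\mathbb Q[G]}$ is the right object throughout: the exact sequence from \cite{Ch} and the flatness computation require the abelian subgroups $A$ and $B$ at each stage to be subgroups of the \emph{ambient} limit group $G$ (so that $\mathbb Q[A] \hookrightarrow \mathcal D_{\mathbb Q[G]}$ and its division-closure is the fraction field of $\mathbb Q[A]$), not merely of some abstract $H$-GICE overgroup. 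Since by Theorem \ref{th3}/Corollary \ref{th6} the chain realizing $K$ lives inside $G$ (each $K_i \leq G$, each extended centralizer is a genuine abelian subgroup of $G$), this is in fact guaranteed; but the writeup should state it explicitly. With that in hand the induction closes and the corollary follows.
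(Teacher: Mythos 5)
Your overall skeleton (apply Lemma \ref{lmL} link by link along the chain $H=K_0<\cdots<K_n=K$, note that the free-product step is the trivial $A=1$ case of the Mayer--Vietoris sequence, compose injections, and quote the strong-Atiyah criterion) is exactly the paper's argument, and your handling of abelian $H$ and of non-abelian free factors is fine. The genuine gap is in your reduction from an arbitrary limit group to this situation. You assert that, for a limit group $G$, ``tracking the proof of Theorem \ref{th3}, one can take $K$ itself to be an $H$-GICE group,'' and later that ``the chain realizing $K$ lives inside $G$.'' Theorem \ref{th3} only produces such a chain when the ambient group is an ICE group; its proof is a construction with covers of ICE spaces. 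For a general limit group $L$, one only has $L\leq G$ with $G$ ICE, and Corollary \ref{th6} deliberately claims no more than that the finite-index subgroup of $L$ is a \emph{subgroup} of an $H$-GICE group: the chain, and in particular the abelian subgroups $B$ used at the centralizer-extension steps, live in $G$, not in $L$, and there is no reason why $K'\cap L$ (with $K'\leq G$ the GICE group of finite index) should itself admit a GICE chain over $H$ inside $L$. So as written your induction only proves the corollary for ICE groups, and the ``loose end'' you flag is resolved by an incorrect claim rather than an argument.

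The missing step is short, and it is how the paper closes the argument: run the whole chain in the ICE group $G$ (which is itself a limit group, so the coefficients $\mathcal D_{\mathbb Q[G]}$ and the criterion from \cite{AJ} apply to all subgroups in sight), obtaining that $cor: H_1(H;\mathcal D_{\mathbb Q[G]})\rightarrow H_1(K';\mathcal D_{\mathbb Q[G]})$ is injective for the finite-index $H$-GICE subgroup $K'\leq G$. Then, for $H<L<G$, use that corestriction is functorial (the composition of corestrictions is the corestriction): the injective map $H_1(H;\mathcal D_{\mathbb Q[G]})\rightarrow H_1(K';\mathcal D_{\mathbb Q[G]})$ factors through $H_1(L\cap K';\mathcal D_{\mathbb Q[G]})$, so $H_1(H;\mathcal D_{\mathbb Q[G]})\rightarrow H_1(L\cap K';\mathcal D_{\mathbb Q[G]})$ is already injective, and $L\cap K'$ has finite index in $L$ and contains $H$. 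This gives $L^2$-independence of $H$ in $L\cap K'$ and hence the $L^2$-Hall property for every limit group, without ever needing a GICE chain inside $L$ itself.
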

\begin{proof} Let $G$ be an ICE- group and $H$ a finitely generated subgroup of $G$. Then by Theorem \ref{th3} there  exists a finite chain of groups $H=K_0<\ldots <K_n=K$ with $K$ of finite index in $G$, where $K_{i+1}$ is either $K_i*\mathbb Z$ or $K_{i+1}$ is an extension of a centralizer of $K_i$. By Lemma \ref{lmL}, the correstriction maps 
$$H_1(K_i; \mathcal D_{\mathbb Q[G]})\rightarrow  H_1(K_{i+1}; \mathcal D_{\mathbb Q[G]})$$ are injective. Hence $H$ is $L^2$-independent in $K$. 
Now, let $H<L<G$, then $H$ will be $L^2$-independent in $L\cap K$ because
the composition of correstrictions maps is correstriction. Thus $L$ is $L^2$-Hall. 
\end{proof}
Therefore we obtain the following theorem.
\begin{theorem} \label{thHN}The geometric Hanna Neumann conjecture is true for hyperbolic limit groups.
\end{theorem}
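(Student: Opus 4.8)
The plan is to obtain Theorem \ref{thHN} as an immediate consequence of the Corollary that every limit group is $L^2$-Hall, combined with the reduction already recorded above: by \cite[Theorem 1.3]{AJ}, if a hyperbolic limit group $G$ satisfies the $L^2$-Hall property, then the geometric Hanna Neumann conjecture holds for $G$. A hyperbolic limit group is in particular a limit group, so the Corollary supplies the $L^2$-Hall hypothesis and the theorem follows. Thus essentially all the content has been discharged in the preceding results, and what remains is to assemble the logical chain and to verify that each input is legitimately available in the hyperbolic case.

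First I would recall why the Corollary holds, since that is where the real work sits. Given a hyperbolic limit group $G$ and a finitely generated $H\le G$, embed $G$ into an ICE group $\tilde G$ by \cite{KM2}; by Theorem \ref{th3} there is a finite-index subgroup $\tilde K\le \tilde G$ with $H\le \tilde K$ and a chain $H=K_0<\dots<K_n=\tilde K$ in which each $K_{i+1}$ is either $K_i*\mathbb Z$ or an extension of a (cyclic) centralizer of $K_i$. Each step realizes $K_{i+1}$ as an amalgam $K_i*_A B$ with $B$ abelian and $A=K_i\cap B$ (take $A=1$, $B=\mathbb Z$ in the free-product case, and $A=C_{K_i}(u)$, $B=A\times\mathbb Z$ in the centralizer-extension case). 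Since limit groups satisfy the strong Atiyah conjecture \cite{AJ}, $L^2$-independence of $K_i$ in $K_{i+1}$ is equivalent to injectivity of the corestriction $H_1(K_i;\mathcal D_{\mathbb Q[G]})\to H_1(K_{i+1};\mathcal D_{\mathbb Q[G]})$, which is exactly Lemma \ref{lmL}. Composing these injections, $H_1(H;\mathcal D_{\mathbb Q[G]})\to H_1(\tilde K;\mathcal D_{\mathbb Q[G]})$ is injective; and since $K:=\tilde K\cap G$ is finite-index in $G$, contains $H$, and the corestriction $H_1(H)\to H_1(K)$ is a factor of the injective corestriction to $H_1(\tilde K)$ (composition of corestrictions being a corestriction), $H$ is $L^2$-independent in the finite-index subgroup $K\le G$. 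Hence $G$ is $L^2$-Hall.

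Finally I would invoke \cite[Theorem 1.3]{AJ} for the $L^2$-Hall hyperbolic limit group $G$, obtaining the inequality $\sum_{x\in U\backslash G/W}\bar\chi(U\cap xWx^{-1})\le\bar\chi(U)\bar\chi(W)$ for all finitely generated $U,W\le G$. The single place where hyperbolicity is genuinely used rather than mere limit-group-ness is internal to that theorem: the Euler characteristics appearing in the conjecture are computed from $H_i(\Gamma_0;\mathbb Q)$, and for hyperbolic limit groups these vanish for $i>2$, as noted before the theorem, so the relevant homological invariants are finite and well-behaved. I do not expect a serious obstacle at this stage; the only care needed is the bookkeeping in the subgroup-passage step of the $L^2$-Hall argument and ensuring that at every link of the GICE chain the amalgamation base is taken to be an abelian subgroup, so that Lemma \ref{lmL} applies verbatim.
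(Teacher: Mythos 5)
Your proposal is correct and follows essentially the same route as the paper: apply Lemma \ref{lmL} along the GICE chain supplied by Theorem \ref{th3} inside an ambient ICE group, compose the injective corestrictions and pass to the intersection with the limit group to get the $L^2$-Hall property, then invoke \cite[Theorem 1.3]{AJ} for hyperbolic limit groups. The only differences are cosmetic (your explicit identification of each chain step as an amalgam $K_i*_AB$ with $B$ abelian, which the paper leaves implicit).
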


\vspace{.2cm}
{\large Acknowledgements}

We thank  A. Vdovina and H. Wilton for very useful discussions.  We  thank A. Jaikin-Zapirain for explaining how the Hanna Neumann conjecture follows from Theorem \ref{th3}.


\begin{thebibliography}{10}





\bibitem {AJ} Y.Antolin, A. Jaikin-Zapirain, The Hanna Neumann conjecture for surface groups.
Compos. Math. 158 (2022), no. 9, 1850-1877.
\bibitem{BR1} K. Bou-Rabee, Quantifying residual finiteness. J. Algebra 323, 729-737 (2010)
\bibitem{BR2} K. Bou-Rabee, M.F Hagen, P. Patel, Residual finiteness growths of virtually special groups. Math.
Z. 279, 297-310 (2015)
\bibitem{BR3} K. Bou-Rabee,  T. Kaletha,  Quantifying residual finiteness of arithmetic groups. Compos.Math. 148,
907-920 (2012)
\bibitem{BR4} K. Bou-Rabee,  D.B. McReynolds, Asymptotic growth and least common multiples in groups. Bull.
Lond. Math. Soc. 43, 1059-1068 (2011)
\bibitem{KM} K. Bou-Rabee, D.B. McReynolds, Extremal behavior of divisibility functions on linear groups.Geom.
Dedicata 175, 407-415 (2015).
\bibitem{B} N.V. Buskin, Efficient separability in free groups. Sibirsk. Mat. Zh. 50, 765-771 (2009)
\bibitem{Ch} I.M. Chiswell,  Exact sequences associated with a graph of groups. J. Pure Appl. Algebra 8
(1976), no. 1, 63-74.
\bibitem{HP}  M.F. Hagen,  P. Patel,  Quantifying separability in virtually special groups. Pacific J. Math. 284, 103-120 (2016)
\bibitem{L} L. Louder, D. B. McReynolds,
P.Patel, Zariski closures and subgroup separability, Sel. Math. New Ser. (2017) 23, 2019-2027. 
\bibitem{Lyndon} R. Lyndon, Groups with parametric exponents, Trans. Amer. Math. Soc., 95, 518-533, 1960.
\bibitem{K1} M. Kassabov, F. Matucci, Bounding the residual finiteness of free groups. Proc. Am.Math. Soc. 139,
2281-2286 (2011)
\bibitem{K2} G. Kozma,  A. Thom,  Divisibility and laws in finite simple groups. Math. Ann. 364, 79-95 (2016)
\bibitem{K3}  O. Kharlampovich,  A.  Myasnikov, M.  Sapir,  Algorithmically complex residually finite groups, Bulletin of Mathematical Sciences 7 (2017), no.
2, 309-352.
\bibitem{KM2} O.Kharlampovich, A. Myasnikov, Irreducible affine varieties over a free group II. Systems in triangular quasi-quadratic form and description of residually free groups, J. Algebra, 200, 1998, 517-570.
\bibitem{KMS}  O. Kharlampovich, A. Myasnikov, R. Sklinos, Frasse Limits of Limit Groups,
Journal of Algebra, 545 (2020) 300-323.

\bibitem{P1} P. Patel, On a Theorem of Peter Scott. Proc. Am. Math. Soc. 142, 2891-2906 (2014)
\bibitem{P2} P. Patel,  On the residual finiteness growths of particular hyperbolic manifold groups. Geom. Dedicata
186(1), 87-103 (2016)
\bibitem{P3}  M. Pengitore, Effective conjugacy separability of finitely generated nilpotent groups. http://arxiv.org/
abs/1502.05445 (2015)
\bibitem{R} I. Rivin,  Geodesics with one self-intersection, and other stories. Adv. Math. 231, 2391-2412 (2012)
\bibitem{S} B. Solie, 
Quantitative residual properties of $\Gamma$- imit groups. (English summary)
Internat. J. Algebra Comput. 24 (2014), no. 2,207-231.
\bibitem {Wilton} H. Wilton, Hall's Theorem for Limit Groups, Geometric and Functional Analysis, vol. 18, no. 1, Apr. 2008, pp. 271-303. 

\end{thebibliography}
\end{document}